\newtheorem{theorem}{Theorem}
\newtheorem{algorithm}[theorem]{Algorithm}
\newtheorem{lemma}[theorem]{Lemma}
\newenvironment{problem}{\pb\rm}{\endpb}
\newenvironment{remark}{\rem\rm}{\endrem}
\newcounter{unnumber}
\newenvironment{proof}{\prf\rm}{\hfill{$\blacksquare$}\endprf}
\newcommand{\R}{\mathbb{R}}%
\newcommand{\N}{\mathbb{N}}%
\newcommand{\ol}{\overline}%
\DeclareMathOperator*\dom{dom}%
\DeclareMathOperator*\B{\overline{\R}}%
\DeclareMathOperator*\gr{Gr}%
\DeclareMathOperator*\ran{ran}%
\DeclareMathOperator*\id{Id}%
\DeclareMathOperator*\argmin{argmin}
\DeclareMathOperator*\zer{zer}
\title{Forward-Backward and Tseng's Type Penalty Schemes for Monotone Inclusion Problems}
\author{Radu Ioan Bo\c{t} \thanks{Department of Mathematics, Chemnitz University of Technology,
D-09107 Chemnitz, Germany, e-mail: radu.bot@mathematik.tu-chemnitz.de. Research partially supported by DFG (German Research Foundation), project BO 2516/4-1.} \and Ern\"{o} Robert Csetnek \thanks {Department of Mathematics, Chemnitz University of Technology, D-09107 Chemnitz, Germany, e-mail: robert.csetnek@mathematik.tu-chemnitz.de. Research supported by DFG (German Research Foundation), project BO 2516/4-1.} }
\begin{document}
\maketitle

\noindent \textbf{Abstract.} We deal with monotone inclusion problems of the form $0\in Ax+Dx+N_C(x)$ in real Hilbert spaces, where $A$ is a maximally monotone operator, $D$ a cocoercive operator and $C$ the nonempty set of zeros of another cocoercive operator. We propose a forward-backward penalty algorithm for solving this problem which extends the one proposed by H. Attouch, M.-O. Czarnecki and J. Peypouquet in \cite{att-cza-peyp-c}. The condition which guarantees the weak ergodic convergence of the sequence of iterates generated by the proposed scheme is formulated by means of the Fitzpatrick function associated to the maximally monotone operator that describes the set $C$. In the second part we introduce a forward-backward-forward algorithm for monotone inclusion problems having the same structure, but this time by replacing the cocoercivity hypotheses with Lipschitz continuity conditions. The latter penalty type algorithm opens the gate to handle monotone inclusion problems with more complicated structures, for instance, involving compositions of maximally monotone operators with linear continuous ones. \vspace{1ex}

\noindent \textbf{Key Words.} maximally monotone operator, Fitzpatrick function, resolvent, cocoercive operator, Lipschitz continuous operator, forward-backward algorithm, forward-backward-forward algorithm, subdifferential, Fenchel conjugate\vspace{1ex}

\noindent \textbf{AMS subject classification.} 47H05, 65K05, 90C25

\section{Introduction and preliminaries}\label{sec-intr}

\subsection{Motivation and problem formulation}

In the last years one can observe in the literature an increasing interest in solving variational inequalities expressed as monotone inclusion problems of the form
\begin{equation}\label{att-cza-peyp-p}
0\in Ax+N_C(x),
\end{equation}
where ${\cal H}$ is a real Hilbert space, $A:{\cal H}\rightrightarrows{\cal H}$ is a maximally monotone operator, $C=\argmin \Psi$ is the set of global minima of the  proper, convex and lower semicontinuous function $\Psi : {\cal H} \rightarrow \overline{\R}:=\R\cup\{\pm\infty\}$ fulfilling $\min \Psi=0$ and $N_C:{\cal H}\rightrightarrows{\cal H}$ is the normal cone of the set $C \subseteq {\cal H}$ (see \cite{att-cza-10, att-cza-peyp-c, att-cza-peyp-p, noun-peyp, peyp-12}). Specifically, on can find in the literature forward-backward algorithms for solving \eqref{att-cza-peyp-p} (see \cite{att-cza-peyp-c, att-cza-peyp-p, noun-peyp, peyp-12}), which perform in each iteration a proximal step with respect to $A$ and a subgradient step with respect to the penalization function $\Psi$.

In case $\Psi : {\cal H} \rightarrow \R$ is differentiable with Lipschitz continuous gradient, for the algorithm, that reads as
$$\verb"Choose" \ x_1 \in {\cal H}. \ \verb"For" \ n \in \N \ \verb"set" \ x_{n+1}=(\id + \lambda_n A )^{-1}(x_n-\lambda_n \beta_n \nabla\Psi(x_n)),$$
with $(\lambda_n)_{n \in \N}$ and $(\beta_n)_{n \in \N}$ sequences of positive real numbers, ergodic convergence results are usually obtained in the following hypotheses
$$(H)\left\{
\begin{array}{lll}
(i) \ A+N_C \mbox{ is maximally monotone and }\{x \in {\cal H} : 0\in Ax+N_C(x)\}\neq\emptyset;\\
(ii) \ \mbox{ For every }p\in\ran N_C, \sum_{n \in \N} \lambda_n\beta_n\left[\Psi^*\left(\frac{p}{\beta_n}\right)-\sigma_C\left(\frac{p}{\beta_n}\right)\right]<+\infty;\\
(iii) \ (\lambda_n)_{n \in \N}\in\ell^2\setminus\ell^1.\end{array}\right.$$
Here, $\Psi^* : {\cal H} \rightarrow \overline \R$ denotes the Fenchel conjugate function of $\Psi$ and $\ran N_C$ the range of the normal cone operator $N_C:{\cal H}\rightrightarrows{\cal H}$. Let us mention that hypothesis (ii), which is the discretized counterpart of a condition introduced in \cite{att-cza-10} in the context of  continuous-time nonautonomous differential inclusions, is satisfied, if $\sum_{n \in \N} \frac{\lambda_n}{\beta_n}<+\infty$ and $\Psi$ is bounded below by a multiple of the square of the distance to $C$ (see \cite{att-cza-peyp-p}). This is for instance the case when $C=\zer L = \{x \in {\cal H}: Lx=0\}$, $L : {\cal H} \rightarrow {\cal H}$ is a linear continuous operator with closed range and $\Psi : {\cal H} \rightarrow \R, \Psi(x)=\|Lx\|^2$ (see \cite{att-cza-peyp-p, att-cza-peyp-c}). For further situations for which condition (ii) is fulfilled we refer to \cite[Section 4.1]{att-cza-peyp-c}.

It is worth mentioning that when $A$ is the convex subdifferential of a proper, convex and lower semicontinuous function $\Phi : {\cal H} \rightarrow \overline \R$ the above algorithm provides an iterative scheme for solving convex optimization problems which can be formulated as
\begin{equation}\label{opt-pr}
\min_{x\in{\cal H}}\{\Phi(x):x\in\argmin\Psi\}.
\end{equation}

Motivated by these considerations, we deal in this paper with monotone inclusion problems of the form
\begin{equation}\label{att-cza-peyp-p-gen}
0\in Ax+Dx+N_C(x),
\end{equation}
where $A :{\cal H}\rightrightarrows{\cal H}$ is a maximally monotone operator, $D :{\cal H}\rightarrow {\cal H}$ is a (single-valued) cocoercive operator and $C \subseteq {\cal H}$ is the (nonempty) set of zeros of another cocoercive operator $B :{\cal H}\rightarrow {\cal H}$. Following \cite{att-cza-peyp-c} we propose a forward-backward penalty algorithm for solving \eqref{att-cza-peyp-p-gen} and prove weak ergodic convergence for the generated sequence of iterates under hypotheses which generalize the ones in $(H)$. To this end we specially generalize (ii) to a condition which involves the Fitzpatrick function associated to the maximally monotone operator $B$. Added to that, we prove strong convergence for the sequence of iterates whenever $A$ is strongly monotone.

The investigations made in this manuscript are completed in Section \ref{sec3} with the treatment of the monotone inclusion problem \eqref{att-cza-peyp-p-gen}, this time by relaxing the cocoercivity of $D$ and $B$ to monotonicity and Lipschitz continuity. We formulate in this more general setting a forward-backward-forward penalty type algorithm for solving \eqref{att-cza-peyp-p-gen} and study its convergence properties. The interest in having a suitable algorithmic scheme in this context is given by the fact that it allows via some primal-dual techniques to deal with monotone inclusion problems having more complicated structures, for instance, involving compositions of maximally monotone operators with linear continuous ones.

\subsection{Notations and preliminary results}

For the readers convenience we present first some notations which are used throughout the paper (see \cite{bo-van, b-hab, bauschke-book, EkTem, simons, Zal-carte}). By $\N = \{1,2,...\}$ we denote the set of \textit{positive integer numbers} and let ${\cal H}$ be a real Hilbert space with \textit{inner product} $\langle\cdot,\cdot\rangle$ and associated \textit{norm} $\|\cdot\|=\sqrt{\langle \cdot,\cdot\rangle}$. The symbols $\rightharpoonup$ and $\rightarrow$ denote weak and strong convergence, respectively. When ${\cal G}$ is another Hilbert space and $K:{\cal H} \rightarrow {\cal G}$ a linear continuous operator, then the \textit{norm} of $K$ is defined as $\|K\| = \sup\{\|Kx\|: x \in {\cal H}, \|x\| \leq 1\}$, while $K^* : {\cal G} \rightarrow {\cal H}$, defined by $\langle K^*y,x\rangle = \langle y,Kx \rangle$ for all $(x,y) \in {\cal H} \times {\cal G}$, denotes the \textit{adjoint operator} of $K$.

For a function $f:{\cal H}\rightarrow\overline{\R}$ we denote by $\dom f=\{x\in {\cal H}:f(x)<+\infty\}$ its \textit{effective domain} and say that $f$ is \textit{proper}, if $\dom f\neq\emptyset$ and $f(x)\neq-\infty$ for all $x\in {\cal H}$.  Let $f^*:{\cal H} \rightarrow \overline \R$, $f^*(u)=\sup_{x\in {\cal H}}\{\langle u,x\rangle-f(x)\}$ for all $u\in {\cal H}$, be the \textit{conjugate function} of $f$. The \textit{subdifferential} of $f$ at $x\in {\cal H}$, with $f(x)\in\R$, is the set $\partial f(x):=\{v\in {\cal H}:f(y)\geq f(x)+\langle v,y-x\rangle \ \forall y\in {\cal H}\}$. We take by convention $\partial f(x):=\emptyset$, if $f(x)\in\{\pm\infty\}$. We also denote by $\min f := \inf_{x \in {\cal H}} f(x)$ and by $\argmin f :=\{x \in {\cal H}: f(x) = \min f \}$.

Let $S\subseteq {\cal H}$ be a nonempty set. The \textit{indicator function} of $S$, $\delta_S:{\cal H}\rightarrow \overline{\R}$, is the function which takes the value $0$ on $S$ and $+\infty$ otherwise. The subdifferential of the indicator function is the \textit{normal cone} of $S$, that is $N_S(x)=\{u\in {\cal H}:\langle u,y-x\rangle\leq 0 \ \forall y\in S\}$, if $x\in S$ and $N_S(x)=\emptyset$ for $x\notin S$. Notice that for $x\in S$, $u\in N_S(x)$ if and only if $\sigma_S(u)=\langle u,x\rangle$, where $\sigma_S$ is the support function of $S$, defined by $\sigma_S(u)=\sup_{y\in S}\langle y,u\rangle$.

For an arbitrary set-valued operator $M:{\cal H}\rightrightarrows {\cal H}$ we denote by $\gr M=\{(x,u)\in {\cal H}\times {\cal H}:u\in Mx\}$ its \emph{graph}, by $\dom M=\{x \in {\cal H} : Mx \neq \emptyset\}$ its \emph{domain}, by $\ran M=\{u\in {\cal H}: \exists x\in {\cal H} \mbox{ s.t. }u\in Mx\}$ its \emph{range} and by $M^{-1}:{\cal H}\rightrightarrows {\cal H}$ its \emph{inverse operator}, defined by $(u,x)\in\gr M^{-1}$ if and only if $(x,u)\in\gr M$. We use also the notation $\zer M=\{x\in {\cal H}: 0\in Mx\}$ for the \textit{set of zeros} of the operator $M$. We say that $M$ is \emph{monotone} if $\langle x-y,u-v\rangle\geq 0$ for all $(x,u),(y,v)\in\gr M$. A monotone operator $M$ is said to be \emph{maximally monotone}, if there exists no proper monotone extension of the graph of $M$ on ${\cal H}\times {\cal H}$. Let us mention that in case $M$ is maximally monotone, $\zer M$ is a convex and closed set \cite[Proposition 23.39]{bauschke-book}. We refer to \cite[Section 23.4]{bauschke-book} for conditions ensuring that $\zer M$ is nonempty.  If $M$ is maximally monotone, then one has the following characterization for the set of its zeros
\begin{equation}\label{charact-zeros-max}
z\in\zer M \mbox{ if and only if }\langle w,u-z\rangle\geq 0\mbox{ for all }(u,w)\in \gr M.
\end{equation}

The operator $M$ is said to be \emph{$\gamma$-strongly monotone} with $\gamma>0$, if $\langle x-y,u-v\rangle\geq \gamma\|x-y\|^2$ for all $(x,u),(y,v)\in\gr M$. Notice that if $M$ is maximally monotone and strongly monotone, then $\zer M$ is a singleton, thus nonempty (see \cite[Corollary 23.37]{bauschke-book}).

The \emph{resolvent} of $M$, $J_M:{\cal H} \rightrightarrows {\cal H}$, is defined by $J_M=(\id+M)^{-1}$, where $\id :{\cal H} \rightarrow {\cal H}, \id(x) = x$ for all $x \in {\cal H}$, is the \textit{identity operator} on ${\cal H}$. Moreover, if $M$ is maximally monotone, then $J_M:{\cal H} \rightarrow {\cal H}$ is single-valued and maximally monotone (cf. \cite[Proposition 23.7 and Corollary 23.10]{bauschke-book}). For an arbitrary $\gamma>0$ we have (see \cite[Proposition 23.18]{bauschke-book})
\begin{equation}\label{j-inv-op}
J_{\gamma M}+\gamma J_{\gamma^{-1}M^{-1}}\circ \gamma^{-1}\id=\id.
\end{equation}

The \emph{Fitzpatrick function} associated to a monotone operator $M$, defined as
$$\varphi_M:{\cal H}\times {\cal H}\rightarrow \B, \ \varphi_M(x,u)=\sup_{(y,v)\in\gr M}\{\langle x,v\rangle+\langle y,u\rangle-\langle y,v\rangle\},$$
is a convex and lower semicontinuous function and it will play an important role throughout the paper. Introduced by Fitzpatrick in \cite{fitz}, this notion opened the gate towards the employment of convex analysis specific tools when investigating the maximality of monotone operators (see \cite{bauschke-book, bausch-m-s, b-hab, BCW-set-val, bo-van, borw-06, bu-sv-02, simons} and the references therein). In case $M$ is maximally monotone, $\varphi_M$ is proper and it fulfils
$$\varphi_M(x,u)\geq \langle x,u\rangle \ \forall (x,u)\in {\cal H}\times {\cal H},$$
with equality if and only if $(x,u)\in\gr M$. Notice that if $f:{\cal H}\rightarrow\B$, is a proper, convex and lower semi-continuous function, then $\partial f$ is a maximally monotone operator (cf. \cite{rock}) and it holds $(\partial f)^{-1} = \partial f^*$. Furthermore, the following inequality is true (see \cite{bausch-m-s})
\begin{equation}\label{fitzp-subdiff-ineq}
 \varphi_{\partial f}(x,u)\leq f(x) +f^*(u) \ \forall (x,u)\in {\cal H}\times {\cal H}.
\end{equation}
We refer the reader to \cite{bausch-m-s}, for formulae of the corresponding Fitzpatrick functions computed for particular classes of monotone operators.

Let $\gamma>0$ be arbitrary. A single-valued operator $M:{\cal H}\rightarrow {\cal H}$ is said to be \textit{$\gamma$-cocoercive}, if $\langle x-y,Mx-My\rangle\geq \gamma\|Mx-My\|^2$ for all $(x,y)\in {\cal H}\times {\cal H}$, and \textit{$\gamma$-Lipschitz continuous}, if $\|Mx-My\|\leq \gamma\|x-y\|$ for all $(x,y)\in {\cal H}\times {\cal H}$. A single-valued linear operator $M:{\cal H} \rightarrow {\cal H}$ is said to be \textit{skew}, if $\langle x,Mx\rangle =0$ for all $x \in {\cal H}$.

We close the section by presenting some convergence results that will be used several times in the paper.  Let $(x_n)_{n \in \N}$ be a sequence in ${\cal H}$ and $(\lambda_k)_{k \in \N}$ a sequence of positive numbers such that $\sum_{k \in \N} \lambda_k=+\infty$. Let $(z_n)_{n \in \N} $ be the sequence of weighted averages defined as (see \cite{att-cza-peyp-c})
\begin{equation}\label{average}
z_n=\frac{1}{\tau_n}\sum_{k=1}^n\lambda_k x_k, \mbox{ where }\tau_n= \sum_{k=1}^n\lambda_k \ \forall n \in \N.
\end{equation}

\begin{lemma}\label{opial-passty} (Opial-Passty) Let $F$ be a nonempty subset of ${\cal H}$ and assume that the limes $\lim_{n\rightarrow\infty}\|x_n-x\|$ exists for every $x\in F$. If every weak cluster point of $(x_n)_{n \in \N}$ (respectively $(z_n)_{n \in \N}$) lies in $F$, then $(x_n)_{n \in \N}$ (respectively $(z_n)_{n \in \N}$) converges weakly to an element in $F$ as $n\rightarrow+\infty$.
\end{lemma}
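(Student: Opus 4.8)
The plan is to treat both assertions at once, since they share the same abstract form: write $(y_n)_{n\in\N}$ for whichever of $(x_n)_{n\in\N}$ or $(z_n)_{n\in\N}$ is under consideration, so that $\ell(x):=\lim_{n\to\infty}\|y_n-x\|$ exists for every $x\in F$ and every weak cluster point of $(y_n)_{n\in\N}$ lies in $F$. I would first record that $(y_n)_{n\in\N}$ is bounded: fixing any $x_0\in F$ (possible since $F\neq\emptyset$), the sequence $(\|y_n-x_0\|)_{n\in\N}$ converges, hence is bounded, and so is $(y_n)_{n\in\N}$. As ${\cal H}$ is a Hilbert space and therefore reflexive, $(y_n)_{n\in\N}$ possesses at least one weak cluster point, and by hypothesis every such point belongs to $F$. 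It thus suffices to prove that $(y_n)_{n\in\N}$ has \emph{exactly one} weak cluster point, for then the whole sequence converges weakly to it and that limit automatically lies in $F$.

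The core step is the uniqueness of the weak cluster point, and this is where I would invest the main effort. Expanding the squared norms gives, for any $x,x'\in F$,
$$\|y_n-x\|^2-\|y_n-x'\|^2=-2\langle y_n,x-x'\rangle+\|x\|^2-\|x'\|^2.$$
The left-hand side converges to $\ell(x)^2-\ell(x')^2$ by hypothesis, so the scalar sequence $(\langle y_n,x-x'\rangle)_{n\in\N}$ converges for every pair $x,x'\in F$. Now let $y$ and $\bar y$ be two weak cluster points, both lying in $F$ by assumption, and choose subsequences $y_{n_k}\rightharpoonup y$ and $y_{m_j}\rightharpoonup\bar y$. Taking $x=y$ and $x'=\bar y$, the full sequence $(\langle y_n,y-\bar y\rangle)_{n\in\N}$ converges; evaluating its limit along the two subsequences yields $\langle y,y-\bar y\rangle=\langle\bar y,y-\bar y\rangle$, that is $\|y-\bar y\|^2=0$, whence $y=\bar y$.

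Finally I would upgrade ``unique weak cluster point'' to ``weakly convergent''. If $(y_n)_{n\in\N}$ did not converge weakly to its unique cluster point $y$, there would be a weakly open neighbourhood $U$ of $y$ and a subsequence lying entirely in the weakly closed complement ${\cal H}\setminus U$; being bounded, this subsequence would admit a further weakly convergent subsequence whose limit, remaining in ${\cal H}\setminus U$, would be a weak cluster point of $(y_n)_{n\in\N}$ distinct from $y$, contradicting uniqueness. Hence $(y_n)_{n\in\N}\rightharpoonup y\in F$, which establishes both statements.

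The only genuine subtlety, and the step I expect to require the most care, is the uniqueness argument, since it is precisely there that the two hypotheses, the existence of the limits $\ell(x)$ and the localization of all cluster points in $F$, must be combined through the scalar convergence of $(\langle y_n,x-x'\rangle)_{n\in\N}$. The boundedness and the passage from unique cluster point to weak limit are then routine consequences of the reflexivity of ${\cal H}$ and of the weak sequential compactness of bounded sets.
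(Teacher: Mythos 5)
There is a genuine gap, and it sits exactly in the step you flagged as the one requiring care. Your unified treatment writes $(y_n)_{n\in\N}$ for ``whichever sequence is under consideration'' and assumes that $\ell(x)=\lim_{n\to\infty}\|y_n-x\|$ exists for every $x\in F$. But the lemma's hypothesis is only that $\lim_{n\to\infty}\|x_n-x\|$ exists for every $x\in F$ --- a condition on the \emph{original} sequence --- even in the ergodic assertion, where the cluster-point condition and the conclusion concern the averages $z_n=\frac{1}{\tau_n}\sum_{k=1}^n\lambda_k x_k$. Nothing in the hypotheses gives you the existence of $\lim_{n\to\infty}\|z_n-x\|$, and it cannot be taken for granted: even when $(z_n)_{n\in\N}$ does converge weakly (the conclusion you are trying to prove), the norms $\|z_n-x\|$ may well oscillate, since weak convergence controls $\langle z_n,x\rangle$ but not $\|z_n\|$. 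So for the $(z_n)$ half of the statement your argument assumes a strictly stronger hypothesis than the one available, and the proof as written only establishes the $(x_n)$ half (which is the classical Opial argument, done correctly: polarization, uniqueness of the weak cluster point, then weak sequential compactness).

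The repair is to keep the polarization identity on the original sequence and transfer only the \emph{scalar} convergence to the averages. From the existence of $\lim_n\|x_n-x\|$ and $\lim_n\|x_n-x'\|$ you get, exactly as you wrote, that $(\langle x_n,x-x'\rangle)_{n\in\N}$ converges for all $x,x'\in F$. Since $\tau_n=\sum_{k=1}^n\lambda_k\to+\infty$, the weighted means of a convergent real sequence converge to the same limit (Toeplitz/Stolz--Ces\`aro), hence
$$\langle z_n,x-x'\rangle=\frac{1}{\tau_n}\sum_{k=1}^n\lambda_k\langle x_k,x-x'\rangle$$
also converges. Now run your uniqueness argument on two weak cluster points $z,\bar z\in F$ of $(z_n)_{n\in\N}$ with $x=z$, $x'=\bar z$ to get $\|z-\bar z\|^2=0$. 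Boundedness of $(z_n)_{n\in\N}$ also cannot be obtained from convergence of $\|z_n-x_0\|$ (same objection), but it follows from boundedness of $(x_n)_{n\in\N}$ via $\|z_n\|\leq\frac{1}{\tau_n}\sum_{k=1}^n\lambda_k\|x_k\|\leq\sup_k\|x_k\|$. With these two substitutions your concluding compactness argument goes through verbatim for $(z_n)_{n\in\N}$ as well.
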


The following result is taken from \cite{att-cza-peyp-c}.

\begin{lemma}\label{Fejer-real-seq} Let $(a_n)_{n \in \N}$, $(b_n)_{n \in \N}$ and $(\varepsilon_n)_{n \in \N}$ be real sequences. Assume that $(a_n)_{n \in \N}$ is bounded from below, $(b_n)_{n \in \N}$ is nonnegative, $(\varepsilon_n)_{n \in \N} \in\ell^1$ and $a_{n+1}-a_n+b_n\leq\varepsilon_n$ for any $n\in\N$. Then $(a_n)_{n \in \N}$ is convergent and $(b_n)_{n \in \N} \in\ell^1$.
\end{lemma}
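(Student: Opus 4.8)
The plan is to reduce the problem to the convergence of a monotone real sequence by absorbing the summable perturbation $(\varepsilon_n)_{n\in\N}$ into an auxiliary sequence. First I would introduce the partial sums $\sigma_n = \sum_{k=1}^{n-1}\varepsilon_k$ (with the convention $\sigma_1 = 0$) and set $t_n = a_n - \sigma_n$. Since $(\varepsilon_n)_{n\in\N} \in \ell^1$, the series $\sum_{k\in\N}\varepsilon_k$ converges absolutely, so $(\sigma_n)_{n\in\N}$ converges to a finite limit and is in particular bounded.

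Next, using $b_n \geq 0$ together with the hypothesis $a_{n+1}-a_n+b_n\leq\varepsilon_n$, I would show that $(t_n)_{n\in\N}$ is non-increasing: indeed $t_{n+1} - t_n = a_{n+1} - a_n - \varepsilon_n \leq -b_n \leq 0$ for every $n\in\N$. Because $(a_n)_{n\in\N}$ is bounded from below and $(\sigma_n)_{n\in\N}$ is bounded, the sequence $(t_n)_{n\in\N}$ is bounded from below; being non-increasing it therefore converges to some $L \in \R$. Then $a_n = t_n + \sigma_n$ converges as the sum of two convergent sequences, which establishes the first assertion.

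For the summability of $(b_n)_{n\in\N}$ I would telescope. Rearranging the hypothesis gives $0 \leq b_n \leq a_n - a_{n+1} + \varepsilon_n$, and summing from $n=1$ to $N$ yields $\sum_{n=1}^N b_n \leq a_1 - a_{N+1} + \sum_{n=1}^N \varepsilon_n$. Since $(a_n)_{n\in\N}$ converges it is bounded, and the partial sums $\sum_{n=1}^N \varepsilon_n$ are bounded (being dominated by $\sum_{k\in\N}|\varepsilon_k| < +\infty$), so the right-hand side is bounded uniformly in $N$. As $(b_n)_{n\in\N}$ is nonnegative, its partial sums are non-decreasing and bounded above, hence $\sum_{n\in\N} b_n < +\infty$, that is $(b_n)_{n\in\N} \in \ell^1$.

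There is no serious obstacle here; the argument is the standard quasi-Fej\'er / Robbins--Siegmund bookkeeping. The only point requiring a little care is the construction of the auxiliary monotone sequence $t_n = a_n - \sigma_n$: one must choose the shift $\sigma_n$ so that the monotonicity inequality $t_{n+1} \leq t_n$ comes out exactly from the given recursion, and then combine the two \emph{separate} boundedness hypotheses (on $(a_n)_{n\in\N}$ and, via $\ell^1$-summability, on $(\sigma_n)_{n\in\N}$) to obtain boundedness from below of $(t_n)_{n\in\N}$ before invoking the monotone convergence theorem.
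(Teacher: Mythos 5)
Your proof is correct. Note that the paper itself gives no proof of this lemma---it is quoted directly from \cite{att-cza-peyp-c}---and your argument (shifting $a_n$ by the partial sums $\sigma_n$ of the summable perturbation to produce a non-increasing sequence bounded from below, applying monotone convergence, and then telescoping the inequality $0\leq b_n\leq a_n-a_{n+1}+\varepsilon_n$ to bound the partial sums of $(b_n)_{n\in\N}$) is exactly the standard quasi-Fej\'er argument behind the cited result, with no gaps.
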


\section{Forward-Backward Penalty Schemes}\label{sec2}

The problem we deal with at the beginning of this section has the following formulation.

\begin{problem}\label{pr-set-val}
Let ${\cal H}$ be a real Hilbert space, $A,B:{\cal H}\rightrightarrows {\cal H}$ maximally monotone operators, $D:{\cal H}\rightarrow{\cal H}$ an $\eta$-cocoercive operator with $\eta>0$ and suppose that $C=\zer B\neq\emptyset$. The monotone inclusion problem to solve is
$$0 \in Ax+Dx+N_C(x).$$
\end{problem}

The following iterative scheme for solving Problem \ref{pr-set-val} is inspired by \cite{att-cza-peyp-c}.

\begin{algorithm}\label{alg-fb-set-val}$ $

\noindent\begin{tabular}{rl}
\verb"Initialization": & \verb"Choose" $x_1\in{\cal H}$\\
\verb"For" $n \in \N$: & \verb"Choose" $w_n\in Bx_n$\\
                     & \verb"Set" $x_{n+1}=J_{\lambda_n A}(x_n-\lambda_n D x_n-\lambda_n\beta_n w_n)$,
\end{tabular}
\end{algorithm}
where $(\lambda_n)_{n \in \N} $ and $(\beta_n)_{n \in \N}$ are sequences of positive real numbers. Notice that Algorithm  \ref{alg-fb-set-val} is well-defined, if $\dom B={\cal H}$, which will be the case in Subsection \ref{subsec2.2}, when $B$ is assumed to be cocoercive. For the convergence statement the following hypotheses are needed
$$(H_{fitz})\left\{
\begin{array}{lll}
(i) \ A+N_C \mbox{ is maximally monotone and }\zer(A+D+N_C)\neq\emptyset;\\
(ii) \ \mbox{ For every }p\in\ran N_C, \sum_{n \in \N} \lambda_n\beta_n\left[\sup\limits_{u\in C}\varphi_B\left(u,\frac{p}{\beta_n}\right)-\sigma_C\left(\frac{p}{\beta_n}\right)\right]<+\infty;\\
(iii) \ (\lambda_n)_{n \in \N} \in\ell^2\setminus\ell^1.\end{array}\right.$$
Since $A$ is maximally monotone and $C$ is a nonempty convex and closed set, $A + N_C$ is maximally monotone if a so-called regularity condition is fulfilled. We refer the reader to
\cite{bauschke-book, b-hab, BCW-set-val, bo-van, borw-06, simons, Zal-carte} for conditions guaranteeing the maximal monotonicity of the sum of two maximally monotone operators.

Further, as $D$ is maximally monotone (see \cite[Example 20.28]{bauschke-book}) and $\dom D={\cal H}$, the hypothesis (i) above guarantees that $A+D+N_C$ is maximally monotone, too (see \cite[Corollary 24.4]{bauschke-book}). Moreover, for each $p\in\ran N_C$ we have
$$\sup\limits_{u\in C}\varphi_B\left(u,\frac{p}{\beta_n}\right)-\sigma_C\left(\frac{p}{\beta_n}\right)\geq 0 \ \forall n\in\N.$$
Indeed, if $p\in\ran N_C$, then there exists $\ol u\in C$ such that $p\in N_C(\ol u)$. This implies that
$$\sup\limits_{u\in C}\varphi_B\left(u,\frac{p}{\beta_n}\right)-\sigma_C\left(\frac{p}{\beta_n}\right)\geq \left\langle \ol u,\frac{p}{\beta_n}\right\rangle-\sigma_C\left(\frac{p}{\beta_n}\right)=0 \ \forall n \in \N.$$

\begin{remark}\label{fitz-conj} Let us mention that, if $Dx=0$ for all $x\in{\cal H}$ and $B=\partial\Psi$, where $\Psi:{\cal H}\rightarrow\B$ is a proper, convex and lower semicontinuous function with $\min \Psi=0$, then the monotone inclusion problem in  Problem \ref{pr-set-val} becomes \eqref{att-cza-peyp-p}, since in this case $C=\argmin\Psi$. Moreover, as $\Psi(x)=0$ for all $x\in C$, by \eqref{fitzp-subdiff-ineq} it follows that condition (ii) in $(H)$ implies condition (ii) in $(H_{fitz})$, hence the hypothesis formulated by means of the Fitzpatrick function extends the one given \cite{att-cza-peyp-c} to the more general setting considered in Problem \ref{pr-set-val}.
\end{remark}

The techniques used as follows extend the ones from \cite{att-cza-peyp-c} to the general context of the monotone inclusion problem stated in Problem \ref{pr-set-val}.

\subsection{The general case}\label{subsec2.1}

In this subsection we will prove an abstract convergence result for Algorithm \eqref{alg-fb-set-val}, which will be subsequently refined in the case when $B$ is a cocoercive operator.

\begin{lemma}\label{fb-ineq1-set-v} Let $(x_n)_{n \in \N}$ and $(w_n)_{n \in \N}$ be the sequences generated by Algorithm \ref{alg-fb-set-val} and take $(u,w)\in\gr (A+D+N_C)$ such that $w=v+p+Du$, where $v\in Au$ and $p\in N_C(u)$. Then the following inequality holds for any $n\in\N$
\begin{align}\label{lem6}
& \|x_{n+1}-u\|^2-\|x_n-u\|^2+\lambda_n(2\eta-3\lambda_n)\|Dx_n-Du\|^2\leq \nonumber \\
& 2\lambda_n\beta_n\left[\sup_{u\in C}\varphi_B\left(u,\frac{p}{\beta_n}\right)-\sigma_C\left(\frac{p}{\beta_n}\right)\right]+3\lambda_n^2\beta_n^2\|w_n\|^2+3\lambda_n^2\|Du+v\|^2+2\lambda_n\langle w,u-x_n\rangle.
\end{align}
\end{lemma}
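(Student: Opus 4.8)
The plan is to start from the defining relation of the iterate, namely $x_{n+1}=J_{\lambda_n A}(x_n-\lambda_n Dx_n-\lambda_n\beta_n w_n)$. By the definition of the resolvent this is equivalent to
\begin{equation*}
\frac{1}{\lambda_n}\bigl(x_n-x_{n+1}-\lambda_n Dx_n-\lambda_n\beta_n w_n\bigr)\in Ax_{n+1},
\end{equation*}
so that $\tfrac{1}{\lambda_n}(x_n-x_{n+1})-Dx_n-\beta_n w_n\in Ax_{n+1}$. Since $v\in Au$ and $A$ is monotone, I would pair this subgradient with $v$ against $x_{n+1}-u$ to obtain
\begin{equation*}
\Bigl\langle \tfrac{1}{\lambda_n}(x_n-x_{n+1})-Dx_n-\beta_n w_n-v,\; x_{n+1}-u\Bigr\rangle\geq 0.
\end{equation*}
Rearranging and multiplying by $2\lambda_n$ gives a bound on $2\langle x_n-x_{n+1},x_{n+1}-u\rangle$, and the polarization identity $2\langle x_n-x_{n+1},x_{n+1}-u\rangle=\|x_n-u\|^2-\|x_{n+1}-u\|^2-\|x_n-x_{n+1}\|^2$ converts this into the beginning of the telescoping quantity $\|x_{n+1}-u\|^2-\|x_n-u\|^2$ that appears on the left of \eqref{lem6}.

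Next I would process the three forward terms. For the cocoercive operator $D$, I expect to combine the monotonicity-type inequality $\langle x_n-u,Dx_n-Du\rangle\geq\eta\|Dx_n-Du\|^2$ with the leftover $\|x_n-x_{n+1}\|^2$ term; the appearance of the coefficient $\lambda_n(2\eta-3\lambda_n)$ and of the three terms carrying a factor $3$ strongly suggests that one splits $x_n-x_{n+1}$ and the forward displacement into three pieces and applies the elementary inequality $\|a+b+c\|^2\leq 3(\|a\|^2+\|b\|^2+\|c\|^2)$, the three summands being driven respectively by $Du+v$, by $\beta_n w_n$, and by the cocoercivity defect $Dx_n-Du$. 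This accounts for the terms $3\lambda_n^2\beta_n^2\|w_n\|^2$ and $3\lambda_n^2\|Du+v\|^2$, while the $-3\lambda_n^2\|Dx_n-Du\|^2$ contribution merges with the cocoercivity gain $2\eta\lambda_n\|Dx_n-Du\|^2$ to yield exactly $\lambda_n(2\eta-3\lambda_n)\|Dx_n-Du\|^2$.

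The decisive step is the treatment of the penalization term $\beta_n w_n$, since this is where the Fitzpatrick function must enter. The crossing term produced above has the form $2\lambda_n\beta_n\langle w_n,u-x_{n+1}\rangle$ (up to a regrouping that absorbs part of it into the $w$-term $2\lambda_n\langle w,u-x_n\rangle$ on the right). Here $w_n\in Bx_n$, so $(x_n,w_n)\in\gr B$, and by the very definition of the Fitzpatrick function $\varphi_B$ one has, for the fixed point $u\in C=\zer B$ and any scaling, the estimate
\begin{equation*}
\Bigl\langle u,\tfrac{p}{\beta_n}w_n\cdot\beta_n\Bigr\rangle\text{-type bound}\leq \beta_n\,\varphi_B\Bigl(u,\tfrac{p}{\beta_n}\Bigr)-\langle p,x_n\rangle+\cdots,
\end{equation*}
which, after using $\sigma_C(p/\beta_n)=\langle p/\beta_n,u\rangle$ valid because $p\in N_C(u)$, produces the bracketed difference $\sup_{u\in C}\varphi_B(u,p/\beta_n)-\sigma_C(p/\beta_n)$ with its coefficient $2\lambda_n\beta_n$. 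I expect the main obstacle to be precisely this Fitzpatrick estimate: one must feed the right arguments into $\varphi_B(x,u)=\sup_{(y,r)\in\gr B}\{\langle x,r\rangle+\langle y,u\rangle-\langle y,r\rangle\}$, evaluate it at the point $(u,p/\beta_n)$ with $(y,r)=(x_n,w_n)$ ranging in the supremum, and carefully track the passage from $x_{n+1}$ back to $x_n$ (which is what generates the residual term $2\lambda_n\langle w,u-x_n\rangle$ rather than $u-x_{n+1}$) so that every term lands with the exact constant claimed. Once the Fitzpatrick bound is in place, collecting all the pieces and taking the supremum over $u\in C$ inside the bracket finishes the proof.
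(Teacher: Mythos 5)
Your proposal follows essentially the same route as the paper's proof: resolvent characterization plus monotonicity of $A$, the polarization identity, Young's inequality to absorb $\|x_{n+1}-x_n\|^2$, the three-way split $\beta_n w_n + (Dx_n-Du) + (Du+v)$ with $\|a+b+c\|^2\leq 3(\|a\|^2+\|b\|^2+\|c\|^2)$, cocoercivity of $D$, and the Fitzpatrick bound obtained by plugging the graph point $(x_n,w_n)$ into the supremum defining $\varphi_B\bigl(u,\tfrac{p}{\beta_n}\bigr)$ together with $\sigma_C\bigl(\tfrac{p}{\beta_n}\bigr)=\bigl\langle \tfrac{p}{\beta_n},u\bigr\rangle$. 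The one display in your Fitzpatrick step is garbled as written, but the surrounding prose states the correct estimate (the only detail worth making explicit is the substitution $v=w-p-Du$, which is what produces both the grouping $\beta_n w_n - p$ and the residual term $2\lambda_n\langle w,u-x_n\rangle$), so this is the paper's argument.
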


\begin{proof}
From the definition of the resolvent of $A$ we have $\frac{x_n-x_{n+1}}{\lambda_n}-\beta_nw_n-Dx_n\in Ax_{n+1}$ and since $v\in Au$, the monotonicity of $A$ guarantees
\begin{equation}\label{A-mon-fb}\langle x_n-x_{n+1}-\lambda_n(\beta_nw_n+Dx_n+v),x_{n+1}-u\rangle\geq 0 \ \forall n \in \N,
\end{equation}
thus
$$\langle x_n-x_{n+1},u-x_{n+1}\rangle\leq \lambda_n\langle \beta_nw_n+Dx_n+v,u-x_{n+1}\rangle \ \forall n \in \N.$$

Further, since $$\langle x_n-x_{n+1},u-x_{n+1}\rangle=\frac{1}{2}\|x_{n+1}-u\|^2-\frac{1}{2}\|x_n-u\|^2+\frac{1}{2}\|x_{n+1}-x_n\|^2,$$ we get for any $n \in \N$
\begin{align*}
& \ \|x_{n+1}-u\|^2-\|x_n-u\|^2\leq 2\lambda_n\langle \beta_nw_n+Dx_n+v,u-x_{n+1}\rangle-\|x_{n+1}-x_n\|^2\\
= & \ 2\lambda_n\langle \beta_nw_n+Dx_n+v,u-x_n\rangle+2\lambda_n\langle \beta_nw_n+Dx_n+v,x_n-x_{n+1}\rangle-\|x_{n+1}-x_n\|^2\\
\leq & \ 2\lambda_n\langle \beta_nw_n+Dx_n+v,u-x_n\rangle+\lambda_n^2\|\beta_nw_n+Dx_n+v\|^2\\
\leq & \ 2\lambda_n\langle \beta_nw_n+Dx_n+v,u-x_n\rangle+3\lambda_n^2\beta_n^2\|w_n\|^2+3\lambda_n^2\|Du+v\|^2+3\lambda_n^2\|Dx_n-Du\|^2.
\end{align*}
Next we evaluate the first term on the right hand-side of the last of the above inequalities. By using the cocoercivity of $D$ and the definition of the Fitzpatrick function and that $w_n\in Bx_n$ and $\sigma_C\left(\frac{p}{\beta_n}\right)=\langle u,\frac{p}{\beta_n}\rangle$ for every $n \in \N$, we obtain
\begin{align*}
& \ 2\lambda_n\langle \beta_nw_n+Dx_n+v,u-x_n\rangle\\
= & \ 2\lambda_n\langle \beta_nw_n+Dx_n+w-p-Du,u-x_n\rangle\\
= & \ 2\lambda_n\langle Dx_n-Du,u-x_n\rangle+2\lambda_n\langle \beta_nw_n-p,u-x_n\rangle+2\lambda_n\langle w,u-x_n\rangle\\
= & \ 2\lambda_n\langle Dx_n-Du,u-x_n\rangle+2\lambda_n\beta_n\left(\langle w_n,u\rangle+\left\langle x_n,\frac{p}{\beta_n}\right\rangle-\langle w_n,x_n\rangle
-\left\langle \frac{p}{\beta_n},u\right\rangle\right)\\
& \ +2\lambda_n\langle w,u-x_n\rangle\\
\leq & \ -2\eta\lambda_n\|Dx_n-Du\|^2+2\lambda_n\beta_n\left[\sup_{u\in C}\varphi_B\left(u,\frac{p}{\beta_n}\right)-\sigma_C\left(\frac{p}{\beta_n}\right)\right]+2\lambda_n\langle w,u-x_n\rangle.
\end{align*}
This provides the desired conclusion.
\end{proof}

\begin{theorem}\label{fb-conv-set-v} Let $(x_n)_{n \in \N}$ and $(w_n)_{n \in \N}$ be the sequences generated by Algorithm \ref{alg-fb-set-val} and $(z_n)_{n \in \N}$ the sequence defined in \eqref{average}.  If $(H_{fitz})$ is fulfilled and $(\lambda_n\beta_n\|w_n\|)_{n \in \N} \in \ell^2$, then  $(z_n)_{n \in \N}$ converges weakly to an element in $\zer(A+D+N_C)$ as $n\rightarrow+\infty$.
\end{theorem}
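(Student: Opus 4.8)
The plan is to verify the two hypotheses of the Opial--Passty lemma (Lemma \ref{opial-passty}) for the set $F=\zer(A+D+N_C)$: that $\lim_{n\to\infty}\|x_n-u\|$ exists for every $u\in F$, and that every weak cluster point of $(z_n)_{n\in\N}$ belongs to $F$. The single inequality of Lemma \ref{fb-ineq1-set-v} is the engine for both parts; the first is obtained by specializing it to $w=0$, the second by keeping it for arbitrary $(u,w)$ and averaging.

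First I would establish the Fej\'er-type property. Fix $u\in F$; since $u$ is a zero, there is a decomposition $0=v+p+Du$ with $v\in Au$ and $p\in N_C(u)$, so Lemma \ref{fb-ineq1-set-v} applies with $w=0$. Its right-hand side then reduces to the error term $\varepsilon_n:=2\lambda_n\beta_n\bigl[\sup_{u\in C}\varphi_B(u,p/\beta_n)-\sigma_C(p/\beta_n)\bigr]+3\lambda_n^2\beta_n^2\|w_n\|^2+3\lambda_n^2\|p\|^2$, which is nonnegative and lies in $\ell^1$: the first summand is summable by $(H_{fitz})(ii)$ (as $p\in\ran N_C$), the second by the auxiliary hypothesis $(\lambda_n\beta_n\|w_n\|)_n\in\ell^2$, and the third by $(\lambda_n)_n\in\ell^2$. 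Since $\lambda_n\to 0$, one has $\lambda_n(2\eta-3\lambda_n)\geq 0$ for $n$ large, so setting $a_n=\|x_n-u\|^2$ and $b_n=\lambda_n(2\eta-3\lambda_n)\|Dx_n-Du\|^2$, Lemma \ref{Fejer-real-seq} yields that $\lim_n\|x_n-u\|$ exists (the first Opial--Passty hypothesis) and that $\sum_n\lambda_n(2\eta-3\lambda_n)\|Dx_n-Du\|^2<+\infty$. In particular $(x_n)$, hence the average sequence $(z_n)$, is bounded, so $(z_n)$ admits weak cluster points.

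Next I would treat the cluster-point condition by an ergodic argument. For an arbitrary $(u,w)\in\gr(A+D+N_C)$ with $w=v+p+Du$, I keep the full inequality of Lemma \ref{fb-ineq1-set-v}, sum it from $k=1$ to $n$, and use $z_n=\tau_n^{-1}\sum_{k=1}^n\lambda_k x_k$ from \eqref{average} to recognise the telescoped cross term as $2\tau_n\langle w,u-z_n\rangle$. Dividing by $2\tau_n$, I would discard the nonnegative tail of $\sum_k\lambda_k(2\eta-3\lambda_k)\|Dx_k-Du\|^2$ together with the nonnegative term $\|x_{n+1}-u\|^2$, bounding the whole right-hand side below by a fixed constant divided by $\tau_n$; since $\tau_n\to+\infty$ (because $(\lambda_n)\notin\ell^1$) and $\varepsilon_n$ is again summable, this gives $\liminf_n\langle w,u-z_n\rangle\geq 0$. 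If $z_{n_k}\rightharpoonup\ol x$, passing to the limit along the subsequence yields $\langle w,u-\ol x\rangle\geq 0$ for every $(u,w)\in\gr(A+D+N_C)$; since this sum is maximally monotone under $(H_{fitz})(i)$ (with $D$ maximally monotone of full domain), the characterization \eqref{charact-zeros-max} gives $\ol x\in\zer(A+D+N_C)=F$.

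With both hypotheses verified, Opial--Passty (Lemma \ref{opial-passty}) delivers the weak convergence of $(z_n)$ to an element of $F$. I expect the main obstacle to be the ergodic estimate: one must ensure that the only terms surviving the division by $\tau_n\to+\infty$ are those tending to $0$, which relies simultaneously on the summability of the three pieces of $\varepsilon_n$ (in particular on correctly invoking the Fitzpatrick condition $(H_{fitz})(ii)$ and the $\ell^2$ hypothesis on $\lambda_n\beta_n\|w_n\|$) and on the boundedness of $(x_n)$ from the first step. The indefinite sign of $2\eta-3\lambda_n$ for small $n$ and the possible negativity of the finitely many initial terms of the $\|Dx_k-Du\|^2$ sum must also be absorbed harmlessly into a constant before dividing by $\tau_n$.
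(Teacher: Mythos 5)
Your proposal is correct and follows essentially the same route as the paper: both parts reduce to the inequality of Lemma \ref{fb-ineq1-set-v}, with the Fej\'er property obtained via Lemma \ref{Fejer-real-seq} at $w=0$ (where indeed $Du+v=-p$) and the cluster-point condition obtained by summing, dividing by $2\tau_n$, and invoking the characterization \eqref{charact-zeros-max} together with the maximal monotonicity of $A+D+N_C$, before concluding with Lemma \ref{opial-passty}. The only cosmetic difference is bookkeeping: you sum the full inequality from $k=1$ and absorb the finitely many indices where $2\eta-3\lambda_k<0$ into a constant, while the paper first discards that term for $n\geq n_0$ and starts the summation at $n_0+1$, absorbing the initial cross terms into the constant $\widetilde{L}$ — the two devices are equivalent.
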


\begin{proof} As $\lim_{n\rightarrow+\infty}\lambda_n=0$, there exists $n_0\in\N$ such that $2\eta-3\lambda_n\geq 0$ for all $n\geq n_0$. Thus, for $(u,w)\in\gr (A+D+N_C)$, such that $w=v+p+Du$, where $v\in Au$ and $p\in N_C(u)$,
by \eqref{lem6} it holds for any $n \geq n_0$
\begin{align}\label{help}
& \|x_{n+1}-u\|^2-\|x_n-u\|^2 \leq \nonumber\\
& 2\lambda_n\beta_n\left[\sup_{u\in C}\varphi_B\left(u,\frac{p}{\beta_n}\right)-\sigma_C\left(\frac{p}{\beta_n}\right)\right] + 3\lambda_n^2\beta_n^2\|w_n\|^2+3\lambda_n^2\|Du+v\|^2 + 2\lambda_n \langle w,u-x_n\rangle.
\end{align}
By Lemma \ref{opial-passty}, it is sufficient to prove that the following two statements hold:
\begin{itemize}
\item[(a)] for every $u\in\zer(A+D+N_C)$ the sequence $(\|x_n-u\|)_{n \in \N}$ is convergent;

\item [(b)] every weak cluster point of $(z_n)_{n \in \N}$ lies in $\zer(A+D+N_C)$.
\end{itemize}

(a) For every $u\in\zer(A+D+N_C)$ one can take $w=0$ in \eqref{help} and the conclusion follows from Lemma \ref{Fejer-real-seq}.

(b) Let $z$ be a weak cluster point of $(z_n)_{n \in \N}$. As we already noticed that $A+D+N_C$ is maximally monotone, in order to show that $z\in\zer(A+D+N_C)$ we will use the characterization given in \eqref{charact-zeros-max}.
Take $(u,w)\in\gr (A+D+N_C)$ such that $w=v+p+Du$, where $v\in Au$ and $p\in N_C(u)$. Let be $N\in\N$ with $N\geq n_0+2$. Summing up for $n=n_0+1,...,N$ the inequalities in \eqref{help}, we get
$$\|x_{N+1}-u\|^2-\|x_{n_0+1}-u\|^2\leq L+2\left\langle w, \sum_{n=1}^N\lambda_nu-\sum_{n=1}^N\lambda_nx_n-\sum_{n=1}^{n_0}\lambda_nu+\sum_{n=1}^{n_0}\lambda_nx_n\right\rangle,$$
where
\begin{align*}
L = & \ 2\sum_{n \geq n_0+1} \lambda_n\beta_n\left[\sup_{u\in C}\varphi_B\left(u,\frac{p}{\beta_n}\right)-\sigma_C\left(\frac{p}{\beta_n}\right)\right]\\
+ & \ 3\sum_{n \geq n_0+1} \lambda_n^2\beta_n^2\|w_n\|^2+3\sum_{n \geq n_0+1} \lambda_n^2\|Du+v\|^2 \in \R.
\end{align*}

Discarding the nonnegative term $\|x_{N+1}-u\|^2$ and dividing by $2\tau_N=2\sum_{k=1}^N\lambda_k$ we obtain
$$-\frac{\|x_{n_0+1}-u\|^2}{2\tau_N}\leq \frac{\widetilde{L}}{2\tau_N}+\langle w,u-z_N\rangle,$$
where $\widetilde{L}:=L+2\langle w, -\sum_{n=1}^{n_0}\lambda_nu+\sum_{n=1}^{n_0}\lambda_nx_n\rangle \in \R$. By passing to the limit as $N\rightarrow+\infty$ and using that $\lim_{N\rightarrow+\infty}\tau_N=+\infty$, we get $$\liminf_{N\rightarrow+\infty}\langle w,u-z_N\rangle\geq 0.$$
Since $z$ is a weak cluster point of $(z_n)_{n \in \N}$, we obtain that $\langle w,u-z\rangle\geq 0$. Finally, as this inequality holds for arbitrary $(u,w)\in\gr (A+D+N_C)$, the desired conclusion follows.
\end{proof}

In the following we show that strong monotonicity of the operator $A$ ensures strong convergence of the sequence $(x_n)_{n \in \N}$.

\begin{theorem}\label{str-mon-fb} Let $(x_n)_{n \in \N}$ and $(w_n)_{n \in \N}$ be the sequences generated by Algorithm \ref{alg-fb-set-val}. If $(H_{fitz})$ is fulfilled, $(\lambda_n\beta_n\|w_n\|)_{n \in \N} \in \ell^2$ and the operator $A$ is $\gamma$-strongly monotone with $\gamma>0$, then $(x_n)_{n \in \N}$ converges strongly to the unique element in $\zer(A+D+N_C)$ as $n\rightarrow+\infty$.
\end{theorem}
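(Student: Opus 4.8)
The plan is to exploit $\gamma$-strong monotonicity of $A$ to upgrade the monotonicity estimate used in Lemma \ref{fb-ineq1-set-v}, thereby producing an extra negative quadratic term $-2\gamma\lambda_n\|x_{n+1}-u\|^2$ on the left-hand side of the fundamental inequality. Concretely, let $u$ be the unique element of $\zer(A+D+N_C)$, which exists and is a singleton because $A+D+N_C$ is maximally and strongly monotone (as $D$ is monotone and $N_C$ is monotone, $A+D+N_C$ inherits $\gamma$-strong monotonicity from $A$). Since $0\in Au+Du+N_C(u)$, I can write $0=v+p+Du$ with $v\in Au$ and $p\in N_C(u)$, so that $w=0$ in the notation of Lemma \ref{fb-ineq1-set-v}. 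The key is to revisit inequality \eqref{A-mon-fb}: instead of plain monotonicity of $A$ between the pairs $(x_{n+1},\tfrac{x_n-x_{n+1}}{\lambda_n}-\beta_nw_n-Dx_n)\in\gr A$ and $(u,v)\in\gr A$, I would use $\gamma$-strong monotonicity to gain $\langle x_n-x_{n+1}-\lambda_n(\beta_nw_n+Dx_n+v),x_{n+1}-u\rangle\geq \gamma\lambda_n\|x_{n+1}-u\|^2$.

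Carrying this extra term through the identical algebraic manipulations as in the proof of Lemma \ref{fb-ineq1-set-v} (the splitting of $\langle x_n-x_{n+1},u-x_{n+1}\rangle$, the Young inequality, and the Fitzpatrick-function bound), I expect to arrive at the strengthened estimate
\begin{align}\label{str-mon-help}
& \|x_{n+1}-u\|^2-\|x_n-u\|^2+2\gamma\lambda_n\|x_{n+1}-u\|^2 \leq \nonumber\\
& 2\lambda_n\beta_n\left[\sup_{u\in C}\varphi_B\left(u,\tfrac{p}{\beta_n}\right)-\sigma_C\left(\tfrac{p}{\beta_n}\right)\right]+3\lambda_n^2\beta_n^2\|w_n\|^2+3\lambda_n^2\|Du+v\|^2,
\end{align}
valid for $n\geq n_0$ (where $2\eta-3\lambda_n\geq 0$, so the $\|Dx_n-Du\|^2$ term can be discarded). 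Writing $a_n:=\|x_n-u\|^2$ and letting $\varepsilon_n$ denote the right-hand side of \eqref{str-mon-help}, hypothesis $(H_{fitz})$(ii), the summability assumption $(\lambda_n\beta_n\|w_n\|)_{n\in\N}\in\ell^2$, and $(\lambda_n)_{n\in\N}\in\ell^2$ together give $(\varepsilon_n)_{n\in\N}\in\ell^1$. Then \eqref{str-mon-help} reads $a_{n+1}-a_n+2\gamma\lambda_n a_{n+1}\leq\varepsilon_n$; by Lemma \ref{Fejer-real-seq} (with $b_n:=2\gamma\lambda_n a_{n+1}\geq 0$) the sequence $(a_n)_{n\in\N}$ converges and $\sum_{n\in\N}\lambda_n a_{n+1}<+\infty$.

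It remains to conclude that $a_n\to 0$. Since $\sum_{n\in\N}\lambda_n=+\infty$ by $(H_{fitz})$(iii) while $\sum_{n\in\N}\lambda_n a_{n+1}<+\infty$, the convergent sequence $(a_n)_{n\in\N}$ cannot have a strictly positive limit; hence $\lim_{n\to+\infty}a_n=0$, i.e. $x_n\to u$ strongly. I anticipate the main obstacle to be purely bookkeeping rather than conceptual: one must confirm that inserting $w=0$ and the strong-monotonicity surplus does not disturb any of the cancellations in the Fitzpatrick estimate of Lemma \ref{fb-ineq1-set-v}, and that the index-$n_0$ truncation and the $\ell^1$ collection of error terms are handled exactly as in Theorem \ref{fb-conv-set-v}. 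The only genuinely new ingredient is the passage from ``$(a_n)$ converges and $\sum\lambda_n a_{n+1}<\infty$ with $\sum\lambda_n=\infty$'' to ``$a_n\to0$,'' which is a short argument by contradiction on the value of the limit.
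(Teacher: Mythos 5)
Your proposal is correct and follows essentially the same route as the paper: the same strong-monotonicity upgrade of \eqref{A-mon-fb}, the same strengthened inequality with the extra term $2\gamma\lambda_n\|x_{n+1}-u\|^2$ and $w=0$, and the same final argument combining $\sum_{n\in\N}\lambda_n\|x_{n+1}-u\|^2<+\infty$, $\sum_{n\in\N}\lambda_n=+\infty$, and convergence of $(\|x_n-u\|)_{n\in\N}$ to force the limit to zero. The only cosmetic difference is that you extract both the convergence of $(\|x_n-u\|^2)_{n\in\N}$ and the summability of $\sum_{n\in\N}\lambda_n\|x_{n+1}-u\|^2$ from a single application of Lemma \ref{Fejer-real-seq} with $b_n:=2\gamma\lambda_n\|x_{n+1}-u\|^2$, whereas the paper cites Theorem \ref{fb-conv-set-v}(a) for the former and obtains the latter by summing the inequalities; both uses are equivalent.
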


\begin{proof} Let be $u \in \zer(A+D+N_C)$ and $w=0=v+p+Du$, where $v \in Au$ and $p \in N_C(u)$. Since $A$ is $\gamma$-strongly monotone, inequality \eqref{A-mon-fb} becomes
\begin{equation}\label{A-str-mon-fb}
\langle x_n-x_{n+1}-\lambda_n(\beta_nw_n+Dx_n+v),x_{n+1}-u\rangle\geq \lambda_n\gamma\|x_{n+1}-u\|^2 \ \forall n \in \N.
\end{equation}
Following the lines of the proof of Lemma \ref{fb-ineq1-set-v} for $w=0$ we obtain for any $n \in \N$
\begin{align*}
& 2\gamma\lambda_n\|x_{n+1}-u\|^2+\|x_{n+1}-u\|^2-\|x_n-u\|^2+\lambda_n(2\eta-3\lambda_n)\|Dx_n-Du\|^2\leq\\
& 2\lambda_n\beta_n\left[\sup_{u\in C}\varphi_B\left(u,\frac{p}{\beta_n}\right)-\sigma_C\left(\frac{p}{\beta_n}\right)\right]+3\lambda_n^2\beta_n^2\|w_n\|^2+3\lambda_n^2\|Du+v\|^2.
\end{align*}
Thus, as $\lim_{n\rightarrow+\infty}\lambda_n=0$, there exists $n_0\in\N$ such that for all $n\geq n_0$
\begin{align*}
& \ 2\gamma\lambda_n\|x_{n+1}-u\|^2+\|x_{n+1}-u\|^2-\|x_n-u\|^2\\
\leq & \ 2\lambda_n\beta_n\left[\sup_{u\in C}\varphi_B\left(u,\frac{p}{\beta_n}\right)-\sigma_C\left(\frac{p}{\beta_n}\right)\right]+3\lambda_n^2\beta_n^2\|w_n\|^2+3\lambda_n^2\|Du+v\|^2
\end{align*}
and, so,
\begin{eqnarray*}
2\gamma\sum_{n \geq n_0} \lambda_n\|x_{n+1}-u\|^2 & \leq & \|x_{n_0}-u\|^2+2\sum_{n \geq n_0} \lambda_n\beta_n\left[\sup_{u\in C}\varphi_B\left(u,\frac{p}{\beta_n}\right)-\sigma_C\left(\frac{p}{\beta_n}\right)\right]\\
&  & +3\sum_{n \geq n_0} \lambda_n^2\beta_n^2\|w_n\|^2 +3\|Du+v\|^2\sum_{n \geq n_0} \lambda_n^2<+\infty.
\end{eqnarray*}
Since $\sum_{n \in \N} \lambda_n=+\infty$ and  $(\|x_n-u\|)_{n \in \N}$ is convergent (see the proof of Theorem \ref{fb-conv-set-v} (a)), it follows $\lim_{n\rightarrow+\infty}\|x_n-u\|=0$.
\end{proof}

\subsection{The case $B$ is cocoercive}\label{subsec2.2}

In this subsection we deal with the situation when $B$ is a (single-valued) cocoercive operator. Our aim is to show that in this setting the assumption $(\lambda_n\beta_n\|w_n\|)_{n \in \N} \in\ell^2$ in Theorem \ref{fb-conv-set-v} and Theorem \ref{str-mon-fb} can be replaced by a milder condition involving only the sequences $(\lambda_n)_{n \in \N} $ and $(\beta_n)_{n \in \N}$. The problem under consideration has the following formulation.

\begin{problem}\label{pr-cocoercive-single-val}
Let ${\cal H}$ be a real Hilbert space, $A:{\cal H}\rightrightarrows {\cal H}$  a maximally monotone operator, $D:{\cal H}\rightarrow{\cal H}$ an $\eta$-cocoercive operator with $\eta>0$, $B:{\cal H}\rightarrow{\cal H}$ a $\mu$-cocoercive operator with $\mu>0$ and suppose that $C=\zer B \neq\emptyset$. The monotone inclusion problem to solve is
$$0\in Ax+Dx+N_C(x).$$
\end{problem}

Algorithm \ref{alg-fb-set-val} has in this particular setting the following formulation.

\begin{algorithm}\label{alg-fb-single-val}$ $

\noindent\begin{tabular}{rl}
\verb"Initialization": & \verb"Choose" $x_1\in{\cal H}$\\
\verb"For" $n \in \N$ \verb"set": & $x_{n+1}=J_{\lambda_n A}(x_n-\lambda_n D x_n-\lambda_n\beta_n Bx_n)$.
\end{tabular}
\end{algorithm}

\begin{remark} (a) If $Dx=0$ for every $x\in{\cal H}$ and $B=\partial\Psi$, where $\Psi:{\cal H}\rightarrow \R$ is a convex and differentiable function with $\mu^{-1}$-Lipschitz gradient for $\mu > 0$ fulfilling $\min \Psi=0$, then we rediscover the setting considered in \cite[Section 3]{att-cza-peyp-c}, while Algorithm \ref{alg-fb-single-val} becomes the iterative method investigated in that paper.

(b) In case $Bx=0$ for all $x\in{\cal H}$ Algorithm \ref{alg-fb-single-val} turns out to be classical forward-backward scheme (see \cite{bauschke-book, combettes, vu}), since in this case $C={\cal H}$, hence $N_C(x)=\{0\}$ for all $x\in{\cal H}$.
\end{remark}

Before stating the convergence result for Algorithm \ref{alg-fb-single-val} some technical results are in order.

\begin{lemma}\label{ineq1-fb-single-val} Let be $u\in C\cap\dom A$ and $v\in Au$. Then for every $\varepsilon\geq 0$ and any $n\in\N$ we have
\begin{align}\label{lem12}
& \|x_{n+1}-u\|^2-\|x_n-u\|^2+\frac{\varepsilon}{1+\varepsilon}\|x_{n+1}-x_n\|^2+\frac{2\varepsilon}{1+\varepsilon}\lambda_n\beta_n\langle Bx_n,x_n-u\rangle \leq \nonumber \\
& \lambda_n\beta_n\left((1+\varepsilon)\lambda_n\beta_n-\frac{2\mu}{1+\varepsilon}\right)\|Bx_n\|^2+2\lambda_n\langle Dx_n+v,u-x_{n+1}\rangle.
\end{align}
\end{lemma}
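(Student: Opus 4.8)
The plan is to follow the template of Lemma~\ref{fb-ineq1-set-v}, but to refrain from using the cocoercivity of $D$ (which plays no role here) and instead to exploit the cocoercivity of $B$ together with the fact that $Bu=0$, since $u\in C=\zer B$. First I would unfold the resolvent step: the definition $x_{n+1}=J_{\lambda_n A}(x_n-\lambda_n Dx_n-\lambda_n\beta_n Bx_n)$ gives $\tfrac{1}{\lambda_n}(x_n-x_{n+1})-Dx_n-\beta_n Bx_n\in Ax_{n+1}$. Applying the monotonicity of $A$ against the pair $(u,v)\in\gr A$ and multiplying by $\lambda_n>0$ yields
\[
\langle x_n-x_{n+1}-\lambda_n(Dx_n+\beta_n Bx_n+v),\,x_{n+1}-u\rangle\geq 0.
\]
Combining this with the three-point identity $\langle x_n-x_{n+1},x_{n+1}-u\rangle=\tfrac12\|x_n-u\|^2-\tfrac12\|x_{n+1}-u\|^2-\tfrac12\|x_{n+1}-x_n\|^2$ produces the master inequality
\[
\|x_{n+1}-u\|^2-\|x_n-u\|^2+\|x_{n+1}-x_n\|^2\leq 2\lambda_n\langle Dx_n+v,u-x_{n+1}\rangle+2\lambda_n\beta_n\langle Bx_n,u-x_{n+1}\rangle,
\]
in which the $D$-and-$v$ contribution already matches the last term of the claimed estimate and is simply carried along.

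The core of the argument is the treatment of the penalty term $2\lambda_n\beta_n\langle Bx_n,u-x_{n+1}\rangle$. I would split $u-x_{n+1}=(u-x_n)+(x_n-x_{n+1})$. On the cross term I apply Young's inequality with weight $1+\varepsilon$, namely
\[
2\lambda_n\beta_n\langle Bx_n,x_n-x_{n+1}\rangle\leq \frac{1}{1+\varepsilon}\|x_{n+1}-x_n\|^2+(1+\varepsilon)\lambda_n^2\beta_n^2\|Bx_n\|^2;
\]
moving the $\tfrac{1}{1+\varepsilon}\|x_{n+1}-x_n\|^2$ to the left converts the coefficient of $\|x_{n+1}-x_n\|^2$ from $1$ into $1-\tfrac{1}{1+\varepsilon}=\tfrac{\varepsilon}{1+\varepsilon}$, exactly the coefficient required on the left-hand side of the claim, while $(1+\varepsilon)\lambda_n^2\beta_n^2\|Bx_n\|^2$ supplies the first part of the $\|Bx_n\|^2$ coefficient.

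It remains to handle $2\lambda_n\beta_n\langle Bx_n,u-x_n\rangle=-2\lambda_n\beta_n\langle Bx_n,x_n-u\rangle$. The key bookkeeping trick is to split the factor $2$ as $\tfrac{2}{1+\varepsilon}+\tfrac{2\varepsilon}{1+\varepsilon}$. The $\tfrac{2\varepsilon}{1+\varepsilon}$-fraction is transferred, with its sign, to the left-hand side, where it becomes the term $\tfrac{2\varepsilon}{1+\varepsilon}\lambda_n\beta_n\langle Bx_n,x_n-u\rangle$ appearing in the statement. On the remaining $\tfrac{2}{1+\varepsilon}$-fraction I invoke the $\mu$-cocoercivity of $B$ together with $Bu=0$, giving $\langle Bx_n,x_n-u\rangle=\langle Bx_n-Bu,x_n-u\rangle\geq\mu\|Bx_n\|^2$, hence $-\tfrac{2}{1+\varepsilon}\lambda_n\beta_n\langle Bx_n,x_n-u\rangle\leq-\tfrac{2\mu}{1+\varepsilon}\lambda_n\beta_n\|Bx_n\|^2$. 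Assembling this with the $(1+\varepsilon)\lambda_n^2\beta_n^2\|Bx_n\|^2$ term factors the whole $\|Bx_n\|^2$ coefficient as $\lambda_n\beta_n\big((1+\varepsilon)\lambda_n\beta_n-\tfrac{2\mu}{1+\varepsilon}\big)$, which is precisely the claimed bound. I expect no genuine obstacle here: the only delicate point is the correct allocation of the coefficient $2$ between the fraction kept on the left and the fraction on which cocoercivity is spent, together with the matching choice of the Young weight $1+\varepsilon$; once these are fixed, every remaining step is an elementary identity or inequality.
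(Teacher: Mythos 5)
Your proof is correct and follows essentially the same route as the paper's: the same master inequality from the resolvent step and monotonicity of $A$, the same splitting $u-x_{n+1}=(u-x_n)+(x_n-x_{n+1})$ with Young's inequality of weight $1+\varepsilon$ on the cross term, and the same allocation of the factor $2$ as $\tfrac{2}{1+\varepsilon}+\tfrac{2\varepsilon}{1+\varepsilon}$, spending cocoercivity of $B$ (with $Bu=0$) only on the first fraction. Nothing is missing; the bookkeeping matches the paper's exactly.
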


\begin{proof} As in the proof of Lemma \ref{fb-ineq1-set-v} we obtain for any $n \in \N$ that
\begin{align*}
& \|x_{n+1}-u\|^2-\|x_n-u\|^2+\|x_{n+1}-x_n\|^2\leq 2\lambda_n\langle \beta_nBx_n+Dx_n+v,u-x_{n+1}\rangle =\\
& 2\lambda_n\beta_n\langle Bx_n,u-x_n\rangle+2\lambda_n\beta_n\langle Bx_n,x_n-x_{n+1}\rangle+2\lambda_n\langle Dx_n+v,u-x_{n+1}\rangle.
\end{align*}
Since $B$ is $\mu$-cocoercive and $Bu=0$ we have that
$$\langle Bx_n,u-x_n\rangle\leq -\mu\|Bx_n\|^2 \ \forall n \in \N,$$
hence
$$2\lambda_n\beta_n\langle Bx_n,u-x_n\rangle\leq -\frac{2\mu}{1+\varepsilon}\lambda_n\beta_n\|Bx_n\|^2+\frac{2\varepsilon}{1+\varepsilon}\lambda_n\beta_n\langle Bx_n,u-x_n\rangle \ \forall n \in \N \ \forall \varepsilon \geq 0.$$
Inequality \eqref{lem12} follows by taking into consideration also that
$$2\lambda_n\beta_n\langle Bx_n,x_n-x_{n+1}\rangle\leq \frac{1}{1+\varepsilon}\|x_{n+1}-x_n\|^2+(1+\varepsilon)\lambda_n^2\beta_n^2\|Bx_n\|^2 \ \forall n \in \N \ \forall \varepsilon \geq 0.$$
\end{proof}

\begin{lemma}\label{ineq2-fb-single-val} Assume that $\limsup_{n\rightarrow+\infty}\lambda_n\beta_n<2\mu$ and let be $u\in C\cap\dom A$ and $v\in Au$. Then there exist $a,b>0$ and $n_0 \in \N$ such that for any $n\geq n_0$ it holds
\begin{align}\label{lem13}
& \|x_{n+1}-u\|^2-\|x_n-u\|^2+a\left(\|x_{n+1}-x_n\|^2+\lambda_n\beta_n\langle Bx_n,x_n-u\rangle + \lambda_n\beta_n\|Bx_n\|^2\right) \leq \nonumber \\
& \left(b\lambda_n^2-2\eta\lambda_n\right)\|Dx_n-Du\|^2+2\lambda_n\langle v+Du,u-x_n\rangle+b\lambda_n^2\|Du+v\|^2.
\end{align}
\end{lemma}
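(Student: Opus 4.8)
The plan is to start from estimate \eqref{lem12} in Lemma \ref{ineq1-fb-single-val}, which is valid for \emph{every} $\varepsilon \geq 0$, and to fix one convenient value of $\varepsilon$. The three quantities on its left-hand side are precisely the three pieces that must appear (with a common factor $a$) on the left-hand side of \eqref{lem13}: the term $\|x_{n+1}-x_n\|^2$, the term $\lambda_n\beta_n\langle Bx_n, x_n-u\rangle$, and -- once the $\|Bx_n\|^2$ coefficient on the right has been made negative and moved to the left -- the term $\lambda_n\beta_n\|Bx_n\|^2$. Note that $\langle Bx_n, x_n - u\rangle = \langle Bx_n - Bu, x_n - u\rangle \geq \mu\|Bx_n\|^2 \geq 0$, since $Bu = 0$ and $B$ is $\mu$-cocoercive; hence this term is nonnegative and the coefficient in front of it may be freely shrunk.

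The key step is the choice of $\varepsilon$. Since $\limsup_{n\to+\infty}\lambda_n\beta_n < 2\mu$, I would fix $\varepsilon > 0$ small enough that $(1+\varepsilon)^2\limsup_{n\to+\infty}\lambda_n\beta_n < 2\mu$. Writing the $\|Bx_n\|^2$-coefficient in \eqref{lem12} as $\lambda_n\beta_n\big((1+\varepsilon)\lambda_n\beta_n - \tfrac{2\mu}{1+\varepsilon}\big) = \tfrac{\lambda_n\beta_n}{1+\varepsilon}\big((1+\varepsilon)^2\lambda_n\beta_n - 2\mu\big)$, this choice yields $c > 0$ and $n_1 \in \N$ with $(1+\varepsilon)\lambda_n\beta_n - \tfrac{2\mu}{1+\varepsilon} \leq -c$ for all $n \geq n_1$; consequently this right-hand side term is bounded above by $-c\lambda_n\beta_n\|Bx_n\|^2$, and moving it to the left produces $+c\lambda_n\beta_n\|Bx_n\|^2$ there.

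Next I would rewrite the remaining right-hand term $2\lambda_n\langle Dx_n + v, u - x_{n+1}\rangle$ by splitting $u - x_{n+1} = (u - x_n) + (x_n - x_{n+1})$ and decomposing $Dx_n + v = (Dx_n - Du) + (Du + v)$. Cocoercivity of $D$ gives $2\lambda_n\langle Dx_n - Du, u - x_n\rangle \leq -2\eta\lambda_n\|Dx_n - Du\|^2$, while $2\lambda_n\langle Du + v, u - x_n\rangle$ is kept untouched, reproducing the term $2\lambda_n\langle v + Du, u - x_n\rangle$ in \eqref{lem13}. The two cross terms $2\lambda_n\langle Dx_n - Du, x_n - x_{n+1}\rangle$ and $2\lambda_n\langle Du + v, x_n - x_{n+1}\rangle$ are then estimated by Young's inequality, $2\lambda_n\langle z, x_n - x_{n+1}\rangle \leq \alpha\|x_n - x_{n+1}\|^2 + \tfrac{\lambda_n^2}{\alpha}\|z\|^2$, with parameters $\alpha, \alpha' > 0$ chosen so that $\alpha + \alpha' < \tfrac{\varepsilon}{1+\varepsilon}$; this converts them into $\lambda_n^2$-multiples of $\|Dx_n - Du\|^2$ and $\|Du+v\|^2$ plus $\|x_n - x_{n+1}\|^2$ terms that can be absorbed by the $\tfrac{\varepsilon}{1+\varepsilon}\|x_{n+1}-x_n\|^2$ already sitting on the left.

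Finally I would collect constants: set $b := \max\{1/\alpha, 1/\alpha'\}$, so that the $\|Dx_n-Du\|^2$ coefficient becomes $\leq b\lambda_n^2 - 2\eta\lambda_n$ and the $\|Du+v\|^2$ coefficient $\leq b\lambda_n^2$, and set $a := \min\{\tfrac{\varepsilon}{1+\varepsilon} - \alpha - \alpha',\, \tfrac{2\varepsilon}{1+\varepsilon},\, c\} > 0$ together with $n_0 := n_1$. Then the three left-hand terms all carry coefficients at least $a$, and since each of them is nonnegative (using $\langle Bx_n, x_n - u\rangle \geq 0$ established above), replacing those coefficients by $a$ only weakens the left-hand side, which yields \eqref{lem13}. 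The sole genuinely delicate point is the first step -- calibrating $\varepsilon$ against the $\limsup$ hypothesis so that the $\|Bx_n\|^2$ coefficient is uniformly negative for large $n$; everything afterwards is Young-inequality bookkeeping whose only constraint, $\alpha + \alpha' < \tfrac{\varepsilon}{1+\varepsilon}$, is exactly what keeps $a$ strictly positive.
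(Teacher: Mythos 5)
Your proposal is correct and follows essentially the same route as the paper's own proof: both start from \eqref{lem12}, split $u-x_{n+1}=(u-x_n)+(x_n-x_{n+1})$ and $Dx_n+v=(Dx_n-Du)+(Du+v)$, use cocoercivity of $D$ and Young's inequality to absorb the cross terms into the $\|x_{n+1}-x_n\|^2$ slack, and exploit the gap in $\limsup_{n\to+\infty}\lambda_n\beta_n<2\mu$ to make the $\|Bx_n\|^2$ coefficient uniformly negative for large $n$, thereby gaining the $\lambda_n\beta_n\|Bx_n\|^2$ term on the left before shrinking all coefficients to a common $a$ (using, as you correctly note explicitly, that $\langle Bx_n,x_n-u\rangle\geq 0$). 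The only differences are cosmetic bookkeeping: you run two separate Young inequalities with parameters $\alpha,\alpha'$ and calibrate $\varepsilon$ via $(1+\varepsilon)^2\limsup_{n\to+\infty}\lambda_n\beta_n<2\mu$, whereas the paper uses one Young step tied to $\varepsilon$ and folds the extra $\tfrac{\varepsilon}{1+\varepsilon}$ into the negativity requirement on the $\|Bx_n\|^2$ coefficient.
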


\begin{proof} We start by noticing that, by making use of the cocoercivity of $D$, for every $\varepsilon > 0$ and any $n\in\N$ it holds
\begin{align*}
& \ 2\lambda_n\langle Dx_n+v,u-x_{n+1}\rangle\\
=& \ 2\lambda_n\langle Dx_n+v,x_n-x_{n+1}\rangle+2\lambda_n\langle Dx_n+v,u-x_n\rangle\\
\leq & \ \frac{\varepsilon}{2(1+\varepsilon)}\|x_{n+1}-x_n\|^2+\frac{2(1+\varepsilon)}{\varepsilon}\lambda_n^2\|Dx_n+v\|^2+2\lambda_n\langle Dx_n+v,u-x_n\rangle \\
\leq & \ \frac{\varepsilon}{2(1+\varepsilon)}\|x_{n+1}-x_n\|^2+\frac{4(1+\varepsilon)}{\varepsilon}\lambda_n^2\|Dx_n-Du\|^2+\frac{4(1+\varepsilon)}{\varepsilon}\lambda_n^2\|Du+v\|^2+\\
& \ 2\lambda_n\langle Dx_n-Du,u-x_n\rangle+2\lambda_n\langle v+Du,u-x_n\rangle\\
\leq & \ \frac{\varepsilon}{2(1+\varepsilon)}\|x_{n+1}-x_n\|^2+\frac{4(1+\varepsilon)}{\varepsilon}\lambda_n^2\|Dx_n-Du\|^2+\frac{4(1+\varepsilon)}{\varepsilon}\lambda_n^2\|Du+v\|^2-\\
& \ 2\lambda_n\eta\| Dx_n-Du\|^2+2\lambda_n\langle v+Du,u-x_n\rangle.
\end{align*}
In combination with \eqref{lem12} it yields for every $\varepsilon > 0$ and any $n\in\N$
\begin{align*}
& \ \|x_{n+1}-u\|^2-\|x_n-u\|^2+\frac{\varepsilon}{2(1+\varepsilon)}\|x_{n+1}-x_n\|^2+\frac{2\varepsilon}{1+\varepsilon}\lambda_n\beta_n\langle Bx_n,x_n-u\rangle\\
& \ + \frac{\varepsilon}{1+\varepsilon}\lambda_n\beta_n\|Bx_n\|^2\\
\leq & \ \lambda_n\beta_n\left((1+\varepsilon)\lambda_n\beta_n-\frac{2\mu}{1+\varepsilon}+\frac{\varepsilon}{1+\varepsilon}\right)\|Bx_n\|^2+ \left(\frac{4(1+\varepsilon)}{\varepsilon}\lambda_n^2-2\eta\lambda_n\right)\|Dx_n-Du\|^2\\
& \ +2\lambda_n\langle v+Du,u-x_n\rangle+\frac{4(1+\varepsilon)}{\varepsilon}\lambda_n^2\|Du+v\|^2.
\end{align*}
Since $\limsup_{n\rightarrow+\infty}\lambda_n\beta_n<2\mu$, there exists $\alpha > 0$ and $n_0 \in\N$ such that $\lambda_n\beta_n<\alpha < 2\mu$ for any $n\geq n_0$. Hence, for any $n \geq n_0$ and every $\varepsilon > 0$ it holds
$$\lambda_n\beta_n\left((1+\varepsilon)\lambda_n\beta_n-\frac{2\mu}{1+\varepsilon}+\frac{\varepsilon}{1+\varepsilon}\right) < \alpha \left((1+\varepsilon)\alpha-\frac{2\mu}{1+\varepsilon}+\frac{\varepsilon}{1+\varepsilon}\right)$$
and one can take $\varepsilon_0 > 0$ small enough such that $(1+\varepsilon_0)\alpha-\frac{2\mu}{1+\varepsilon_0}+\frac{\varepsilon_0}{1+\varepsilon_0} < 0$. Taking
$a=\frac{\varepsilon_0}{2(1+\varepsilon_0)}$ and $b=\frac{4(1+\varepsilon_0)}{\varepsilon_0}$ the desired conclusion follows.
\end{proof}

\begin{lemma}\label{ineq3-fb-single-val} Assume that $\limsup_{n\rightarrow+\infty}\lambda_n\beta_n<2\mu$ and $\lim_{n\rightarrow+\infty}\lambda_n=0$ and let be $(u,w)$ $\in\gr(A+D+N_C)$ such that $w=v+p+Du$, where $v\in Au$ and $p\in N_C(u)$. Then there exist $a,b >0$ and $n_1 \in \N$ such that for all $n \geq n_1$ it holds
\begin{align}\label{lem14}
& \ \|x_{n+1}-u\|^2-\|x_n-u\|^2+a\left(\|x_{n+1}-x_n\|^2+\frac{\lambda_n\beta_n}{2}\langle Bx_n,x_n-u\rangle + \lambda_n\beta_n\|Bx_n\|^2\right) \nonumber\\
\leq & \ \frac{a\lambda_n\beta_n}{2}\left[\sup_{u\in C}\varphi_B\left(u,\frac{4p}{a\beta_n}\right)-\sigma_C\left(\frac{4p}{a\beta_n}\right)\right]+2\lambda_n\langle w,u-x_n\rangle+b\lambda_n^2\|Du+v\|^2.
\end{align}
\end{lemma}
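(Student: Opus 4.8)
The plan is to start from the conclusion of Lemma~\ref{ineq2-fb-single-val}, i.e. from inequality \eqref{lem13} with the very same constants $a,b>0$ and index $n_0$ (the hypotheses $\limsup_{n\to\infty}\lambda_n\beta_n<2\mu$ and $\lim_{n\to\infty}\lambda_n=0$ are available, and since $p\in N_C(u)$ forces $u\in C$ while $v\in Au$ forces $u\in\dom A$, the fixed $u$ indeed lies in $C\cap\dom A$). From there the argument is pure bookkeeping: redistribute the penalty term and then recognize a Fitzpatrick bound.

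First I would split the right-hand side of \eqref{lem13} using $w=v+p+Du$, writing
$$2\lambda_n\langle v+Du,u-x_n\rangle = 2\lambda_n\langle w,u-x_n\rangle + 2\lambda_n\langle p,x_n-u\rangle,$$
which already produces the target term $2\lambda_n\langle w,u-x_n\rangle$ and leaves a stray normal-cone contribution $2\lambda_n\langle p,x_n-u\rangle$. Next I would keep only half of the penalty term on the left, transferring the other half to the right as $\tfrac{a\lambda_n\beta_n}{2}\langle Bx_n,u-x_n\rangle$; this leaves precisely $\tfrac{a\lambda_n\beta_n}{2}\langle Bx_n,x_n-u\rangle$ on the left, while the terms $a\|x_{n+1}-x_n\|^2$ and $a\lambda_n\beta_n\|Bx_n\|^2$ are untouched, matching the left-hand side of \eqref{lem14}.

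The key step is to merge the two leftover terms $\tfrac{a\lambda_n\beta_n}{2}\langle Bx_n,u-x_n\rangle$ and $2\lambda_n\langle p,x_n-u\rangle$ into a single Fitzpatrick bound. Factoring out $\tfrac{a\lambda_n\beta_n}{2}$ and using the identity $2\lambda_n=\tfrac{a\lambda_n\beta_n}{2}\cdot\tfrac{4}{a\beta_n}$ recasts their sum as
$$\frac{a\lambda_n\beta_n}{2}\left[\langle Bx_n,u\rangle-\langle Bx_n,x_n\rangle+\left\langle \frac{4p}{a\beta_n},x_n\right\rangle-\left\langle \frac{4p}{a\beta_n},u\right\rangle\right].$$
Because $(x_n,Bx_n)\in\gr B$, the first three terms in the bracket are bounded above by $\varphi_B\!\left(u,\tfrac{4p}{a\beta_n}\right)$ directly from the definition of the Fitzpatrick function, whereas $p\in N_C(u)$ together with the positive homogeneity of $\sigma_C$ gives $\langle \tfrac{4p}{a\beta_n},u\rangle=\sigma_C\!\left(\tfrac{4p}{a\beta_n}\right)$. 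Passing to the supremum over $u\in C$, which is legitimate since the fixed $u$ belongs to $C$, yields exactly the penalty expression appearing in \eqref{lem14}. This is the only point where the specific scaling $\tfrac{4p}{a\beta_n}$ is forced, and I expect matching that factor correctly to be the main thing to watch.

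Finally I would invoke $\lim_{n\to\infty}\lambda_n=0$ to choose $n_1\geq n_0$ with $b\lambda_n-2\eta\leq 0$, hence $b\lambda_n^2-2\eta\lambda_n\leq 0$, for all $n\geq n_1$; the nonpositive term $(b\lambda_n^2-2\eta\lambda_n)\|Dx_n-Du\|^2$ is then simply discarded. Collecting the surviving terms produces \eqref{lem14} for all $n\geq n_1$, with the same $a,b$ provided by Lemma~\ref{ineq2-fb-single-val}.
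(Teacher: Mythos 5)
Your proposal is correct and takes essentially the same route as the paper's own proof: start from inequality \eqref{lem13} of Lemma \ref{ineq2-fb-single-val} with the same $a,b,n_0$, split $2\lambda_n\langle v+Du,u-x_n\rangle$ via $w=v+p+Du$, transfer half of the penalty term to the right and merge it with $2\lambda_n\langle p,x_n-u\rangle$ into the Fitzpatrick bound with the scaling $\frac{4p}{a\beta_n}$ (using $(x_n,Bx_n)\in\gr B$ and $\sigma_C\bigl(\frac{4p}{a\beta_n}\bigr)=\bigl\langle \frac{4p}{a\beta_n},u\bigr\rangle$), and finally use $\lambda_n\to 0$ to discard $(b\lambda_n^2-2\eta\lambda_n)\|Dx_n-Du\|^2$. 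The only difference is the order of steps (the paper discards the $D$-term before the Fitzpatrick estimation, you after), which is immaterial.
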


\begin{proof}
According to Lemma \ref{ineq2-fb-single-val}, there exist $a,b >0$ and $n_0 \in \N$ such that for any $n \geq n_0$ inequality \eqref{lem13} holds. Since $\lim_{n\rightarrow\infty}\lambda_n=0$, there exists $n_1\in\N, n_1 \geq n_0$ such that $b\lambda_n^2-2\eta\lambda_n\leq 0$ for all $n\geq n_1$, hence,
\begin{align*}
& \ \|x_{n+1}-u\|^2-\|x_n-u\|^2+a\left(\|x_{n+1}-x_n\|^2+\lambda_n\beta_n\langle Bx_n,x_n-u\rangle + \lambda_n\beta_n\|Bx_n\|^2\right) \\
\leq & \ 2\lambda_n\langle v+Du,u-x_n\rangle+b\lambda_n^2\|Du+v\|^2 \ \forall n \geq n_1.
\end{align*}
The conclusion follows by combining this inequality with the subsequent estimation that holds for any $n \in \N$:
\begin{align*}
& \ 2\lambda_n\langle v+Du,u-x_n\rangle+\frac{a\lambda_n\beta_n}{2}\langle Bx_n,u-x_n\rangle\\
= & \ 2\lambda_n\langle -p,u-x_n\rangle+\frac{a\lambda_n\beta_n}{2}\langle Bx_n,u-x_n\rangle+2\lambda_n\langle w,u-x_n\rangle\\
= & \ \frac{a\lambda_n\beta_n}{2}\left(\langle Bx_n,u\rangle +\left\langle x_n,\frac{4p}{a\beta_n}\right\rangle-\langle Bx_n,x_n\rangle-\left\langle\frac{4p}{a\beta_n},u\right\rangle\right)+2\lambda_n\langle w,u-x_n\rangle\\
\leq & \ \frac{a\lambda_n\beta_n}{2}\left[\sup_{u\in C}\varphi_B\left(u,\frac{4p}{a\beta_n}\right)-\sigma_C\left(\frac{4p}{a\beta_n}\right)\right]+2\lambda_n\langle w,u-x_n\rangle.
\end{align*}
\end{proof}

\begin{theorem}\label{fb-conv-single-val}
Let $(x_n)_{n \in \N}$ and $(w_n)_{n \in \N}$ be the sequences generated by Algorithm \ref{alg-fb-single-val} and $(z_n)_{n \in \N}$ the sequence defined in \eqref{average}. If $(H_{fitz})$ is fulfilled and $\limsup_{n\rightarrow+\infty}\lambda_n\beta_n<2\mu$, then the following statements are true:
\begin{itemize}
\item[(i)] for every $u\in\zer(A+D+N_C)$ the sequence $(\|x_n-u\|)_{n \in \N}$ is convergent and the series $\sum_{n \in \N} \|x_{n+1}-x_n\|^2$, $\sum_{n \in \N} \lambda_n\beta_n\langle Bx_n,x_n-u\rangle$ and $\sum_{n \in \N} \lambda_n\beta_n\|Bx_n\|^2$ are convergent as well. In particular $\lim_{n\rightarrow+\infty}\|x_{n+1}-x_n\|=0$. If, moreover, $\liminf_{n\rightarrow+\infty} \lambda_n\beta_n>0$, then $\lim_{n\rightarrow+\infty}\langle Bx_n,x_n-u\rangle=\lim_{n\rightarrow+\infty}\|Bx_n\|=0$ and every weak cluster point of $(x_n)_{n \in \N}$ lies in $C$.

\item[(ii)] $(z_n)_{n \in \N}$ converges weakly to an element in $\zer(A+D+N_C)$ as $n\rightarrow+\infty$.

\item[(iii)] if, additionally, $A$ is strongly monotone, then $(x_n)_{n \in \N}$ converges strongly  to the unique element in $\zer(A+D+N_C)$ as $n\rightarrow+\infty$.
\end{itemize}
\end{theorem}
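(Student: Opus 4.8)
The plan is to prove the three statements in order, using Lemma \ref{ineq3-fb-single-val} as the workhorse together with the two convergence lemmas, Lemma \ref{Fejer-real-seq} and Lemma \ref{opial-passty}. For (i) I would fix $u\in\zer(A+D+N_C)$ and write $0=v+p+Du$ with $v\in Au$ and $p\in N_C(u)$, so that $w=0$ in \eqref{lem14}. The crucial observation is that each of the three terms carrying the factor $a$ on the left-hand side of \eqref{lem14} is nonnegative: $\|x_{n+1}-x_n\|^2\geq 0$ and $\lambda_n\beta_n\|Bx_n\|^2\geq 0$ trivially, while $\langle Bx_n,x_n-u\rangle\geq 0$ because $B$ is $\mu$-cocoercive and $Bu=0$ (as $u\in C=\zer B$), whence $\langle Bx_n,x_n-u\rangle=\langle Bx_n-Bu,x_n-u\rangle\geq \mu\|Bx_n\|^2\geq 0$. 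Thus the term $a(\cdots)$ serves as the nonnegative sequence $(b_n)$ in Lemma \ref{Fejer-real-seq}, while $a_n:=\|x_n-u\|^2$ is bounded from below. It remains to check that the right-hand side of \eqref{lem14} (with $w=0$) is summable: the contribution $b\lambda_n^2\|Du+v\|^2$ is in $\ell^1$ since $(\lambda_n)_{n\in\N}\in\ell^2$ by $(H_{fitz})(iii)$, and, because $N_C(u)$ is a cone, $\tfrac{4p}{a}\in\ran N_C$, so the Fitzpatrick term is summable by $(H_{fitz})(ii)$ applied to the point $\tfrac{4p}{a}$. Lemma \ref{Fejer-real-seq} then yields convergence of $(\|x_n-u\|)_{n\in\N}$ and membership of the three series in $\ell^1$; in particular $\|x_{n+1}-x_n\|\to 0$. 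If additionally $\liminf_{n\rightarrow+\infty}\lambda_n\beta_n>0$, summability of $\sum_{n\in\N}\lambda_n\beta_n\|Bx_n\|^2$ and of $\sum_{n\in\N}\lambda_n\beta_n\langle Bx_n,x_n-u\rangle$ forces $\|Bx_n\|\to 0$ and $\langle Bx_n,x_n-u\rangle\to 0$; finally, if $x_{n_k}\rightharpoonup x$ then $Bx_{n_k}\to 0$, and the weak-strong sequential closedness of the graph of the maximally monotone operator $B$ gives $Bx=0$, i.e. $x\in C$.

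For (ii) I would follow the argument of Theorem \ref{fb-conv-set-v}(b). Since (i) supplies convergence of $(\|x_n-u\|)_{n\in\N}$ for every $u\in F:=\zer(A+D+N_C)$, by Lemma \ref{opial-passty} it suffices to show that every weak cluster point of $(z_n)_{n\in\N}$ lies in $F$. Fixing $(u,w)\in\gr(A+D+N_C)$ with $w=v+p+Du$, I would discard the nonnegative $a(\cdots)$ term from \eqref{lem14}, sum the resulting inequalities from $n_1$ to $N$, drop the nonnegative $\|x_{N+1}-u\|^2$, and divide by $2\tau_N$. Since the Fitzpatrick and the $\lambda_n^2$ contributions are summable (exactly as in (i)) and $\tau_N\to+\infty$, passing to the limit $N\rightarrow+\infty$ produces $\liminf_{N\rightarrow+\infty}\langle w,u-z_N\rangle\geq 0$, hence $\langle w,u-z\rangle\geq 0$ for any weak cluster point $z$. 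As $A+D+N_C$ is maximally monotone and $(u,w)$ is arbitrary in its graph, the characterization \eqref{charact-zeros-max} gives $z\in F$, and Opial-Passty concludes the weak convergence.

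For (iii) I would mimic Theorem \ref{str-mon-fb}: when $A$ is $\gamma$-strongly monotone the monotonicity inequality for the resolvent sharpens to \eqref{A-str-mon-fb}, so that running through Lemmas \ref{ineq1-fb-single-val}--\ref{ineq3-fb-single-val} once more adds the term $2\gamma\lambda_n\|x_{n+1}-u\|^2$ to the left-hand side of \eqref{lem14}. Taking $u$ to be the element of $\zer(A+D+N_C)$ with $w=0$, which is unique by strong monotonicity of $A+D+N_C$, dropping the nonnegative $a(\cdots)$ term and summing over $n\geq n_1$ gives $2\gamma\sum_{n\geq n_1}\lambda_n\|x_{n+1}-u\|^2<+\infty$. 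Since $\sum_{n\in\N}\lambda_n=+\infty$ by $(H_{fitz})(iii)$ and $(\|x_n-u\|)_{n\in\N}$ converges by (i), this forces $\lim_{n\rightarrow+\infty}\|x_n-u\|=0$, i.e. strong convergence of $(x_n)_{n\in\N}$ to $u$. The main obstacle I anticipate is the sign bookkeeping in (i)---verifying the nonnegativity of every left-hand term so that Lemma \ref{Fejer-real-seq} applies, and correctly invoking $(H_{fitz})(ii)$ at the scaled point $\tfrac{4p}{a}$ via the cone property of $N_C(u)$; the weak-strong closedness of $\gr B$ for the cluster-point statement is the only genuinely topological input.
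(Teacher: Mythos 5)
Your proposal is correct, and part (i) is exactly the paper's argument: apply \eqref{lem14} with $w=0$, note that all three $a$-weighted terms on the left-hand side are nonnegative (using $\langle Bx_n,x_n-u\rangle\geq\mu\|Bx_n\|^2\geq 0$ since $Bu=0$), verify summability of the right-hand side via $(H_{fitz})$(ii) applied at the point $\frac{4p}{a}\in\ran N_C$ (the cone property you invoke is indeed the needed step, left implicit in the paper) together with $(\lambda_n)_{n\in\N}\in\ell^2$, and conclude with Lemma \ref{Fejer-real-seq}; the weak-strong sequential closedness of $\gr B$ for the cluster-point claim is also the intended route. For (ii) and (iii), however, you take a different and more laborious path: you re-run the proofs of Theorem \ref{fb-conv-set-v}(b) and Theorem \ref{str-mon-fb} with \eqref{lem14} playing the role of \eqref{help}. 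The paper instead disposes of both parts in one line: since $\limsup_{n\rightarrow+\infty}\lambda_n\beta_n<2\mu$ makes $(\lambda_n\beta_n)_{n\in\N}$ bounded above, the summability $\sum_{n\in\N}\lambda_n\beta_n\|Bx_n\|^2<+\infty$ already obtained in (i) yields $(\lambda_n\beta_n\|Bx_n\|)_{n\in\N}\in\ell^2$, which is precisely the extra hypothesis under which Theorem \ref{fb-conv-set-v} and Theorem \ref{str-mon-fb} were proved (with $w_n=Bx_n$), so (ii) and (iii) follow as corollaries of the set-valued case. The paper's reduction buys brevity and modularity; your inline derivation is self-contained, equally valid, and spares the reader from checking the hypotheses of the earlier theorems: the telescoping, the division by $\tau_N$, the use of \eqref{charact-zeros-max} with Opial-Passty in (ii), and the deduction in (iii) from $\sum_{n}\lambda_n\|x_{n+1}-u\|^2<+\infty$, $\sum_{n}\lambda_n=+\infty$ and the convergence of $(\|x_n-u\|)_{n\in\N}$ are exactly the steps those earlier proofs perform.
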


\begin{proof} For every $u\in \zer(A+D+N_C)$, according to Lemma \ref{ineq3-fb-single-val}, there exist $a,b >0$ and $n_1 \in \N$ such that for all $n \geq n_1$ inequality \eqref{lem14} is true for $w=0$. This gives rise via Lemma \ref{Fejer-real-seq} to the statements in (i). As the sequence $(\lambda_n\beta_n)_{n \in \N}$ is bounded above, it automatically follows that $(\lambda_n\beta_n\|Bx_n\|)_{n \in \N} \in\ell^2$. Hence, (ii) and (iii) follow as consequences of Theorem \ref{fb-conv-set-v} and Theorem \ref{str-mon-fb}, respectively.
\end{proof}

\section{Tseng's Type Penalty Schemes}\label{sec3}

In this section we deal first with the monotone inclusion problem stated in Problem \ref{pr-cocoercive-single-val} by relaxing the cocoercivity of $B$ and $D$ to monotonicity and Lipschitz continuity. The iterative method we propose in this setting is a forward-backward-forward penalty scheme and relies on a method introduced by Tseng in \cite{tseng} (see \cite{br-combettes, combettes-pesquet, bauschke-book} for further details and motivations). By making use of primal-dual techniques we will be able then to employ the proposed approach when solving monotone inclusion problems involving compositions of maximally monotone operators with linear continuous ones.

\subsection{Relaxing cocoercivity to monotonicity and Lipschitz continuity}\label{fbf-sum}

We deal first we the following problem.

\begin{problem}\label{pr-Lip-single-val}
Let ${\cal H}$ be a real Hilbert space, $A:{\cal H}\rightrightarrows {\cal H}$  a maximally monotone operator, $D:{\cal H}\rightarrow{\cal H}$ a monotone and $\eta^{-1}$-Lipschitz continuous operator with $\eta > 0$, $B:{\cal H}\rightarrow{\cal H}$ a monotone and $\mu^{-1}$-Lipschitz continuous operator with $\mu>0$ and suppose that $C=\zer B \neq\emptyset$. The monotone inclusion problem to solve is
$$0\in Ax+Dx+N_C(x).$$
\end{problem}

One can notice that we have relaxed the assumptions imposed on $B$ and $D$ in Problem \ref{pr-cocoercive-single-val}, however, they are both maximally monotone, see \cite[Corollary 20.25]{bauschke-book}. It is obvious that an $\eta$-cocoercive operator with $\eta >0$ is monotone and $\eta^{-1}$-Lipschitz continuous, while the opposite is not the case. It is well-known that, due to the celebrated Baillon-Haddad Theorem (see, for instance, \cite[Corollary 8.16]{bauschke-book}), the gradient of a convex and differentiable function do not provides a counterexample in this sense, however, nonzero linear, skew and Lipschitz continuous operators are monotone, but not cocoercive. For example, when ${\cal H}$ and ${\cal G}$ are real Hilbert spaces and $L:{\cal H}\rightarrow {\cal G}$ is nonzero linear continuous, then $(x,v)\mapsto (L^*v,-Lx)$ is an example in this sense. This operator appears in a natural way when employing primal-dual approaches in the context of monotone inclusion problems as done in \cite{br-combettes} (see also \cite{b-c-h, b-h, combettes-pesquet, vu}).

\begin{algorithm}\label{alg-fbf}$ $

\noindent\begin{tabular}{rl}
\verb"Initialization": & \verb"Choose" $x_1\in{\cal H}$\\
\verb"For" $n \in \N$ \verb"set": & $p_n=J_{\lambda_n A}(x_n-\lambda_nDx_n-\lambda_n\beta_nBx_n)$\\
                                &  $x_{n+1}=\lambda_n\beta_n(Bx_n-Bp_n)+\lambda_n(Dx_n-Dp_n)+p_n$,
\end{tabular}
\end{algorithm}
where $(\lambda_n)_{n \in \N}$ and $(\beta_n)_{n \in \N}$ are sequences of positive real numbers.

\begin{remark} If $Bx=0$ for every $x\in{\cal H}$  (which corresponds to the situation $N_C(x)=\{0\}$ for all $x\in{\cal H}$), then Algorithm \ref{alg-fbf} turns out to be the error-free forward-backward-forward scheme from \cite[Theorem 2.5]{br-combettes} (see also \cite{tseng}).
\end{remark}

We start with the following technical statement.

\begin{lemma}\label{fbf-ineq1}  Let $(x_n)_{n \in \N}$ and $(p_n)_{n \in \N}$ be the sequences generated by Algorithm \ref{alg-fbf} and  let be $(u,w) \in \gr(A+D+N_C)$ such that $w=v+p+Du$, where $v\in Au$ and $p\in N_C(u)$. Then the following inequality holds for all $n\in\N$:
\begin{align}\label{lem19}
& \ \|x_{n+1}-u\|^2-\|x_n-u\|^2+\left(1-\left(\frac{\lambda_n\beta_n}{\mu} + \frac{\lambda_n}{\eta} \right)^2\right)\|x_n-p_n\|^2 \nonumber \\
\leq & \ 2\lambda_n\beta_n\left[\sup_{u\in C}\varphi_B\left(u,\frac{p}{\beta_n}\right)-\sigma_C\left(\frac{p}{\beta_n}\right)\right]+2\lambda_n\langle w,u-p_n\rangle.
\end{align}
\end{lemma}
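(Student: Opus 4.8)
The plan is to run Tseng's forward-backward-forward bookkeeping while anchoring the penalty estimate on the Fitzpatrick function, exactly as in Lemma \ref{fb-ineq1-set-v} but with the operator values shifted from $x_n$ to $p_n$. First I would read off from the definition of the resolvent of $A$ that $\frac{1}{\lambda_n}(x_n-p_n)-Dx_n-\beta_nBx_n\in Ap_n$. Testing this against $v\in Au$ via the monotonicity of $A$ gives $\langle p_n-u,x_n-p_n\rangle\geq\lambda_n\langle p_n-u,Dx_n+\beta_nBx_n+v\rangle$. It is convenient to abbreviate the forward-correction term as $r_n:=\lambda_n(Dx_n-Dp_n)+\lambda_n\beta_n(Bx_n-Bp_n)$, so that the update reads $x_{n+1}=p_n+r_n$.

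Next I would expand $\|x_{n+1}-u\|^2=\|p_n-u\|^2+2\langle p_n-u,r_n\rangle+\|r_n\|^2$ and rewrite $\|p_n-u\|^2=\|x_n-u\|^2-\|x_n-p_n\|^2-2\langle p_n-u,x_n-p_n\rangle$. Substituting the monotonicity inequality for the last inner product and then adding the term $2\langle p_n-u,r_n\rangle$ produces the characteristic Tseng cancellation: the contributions containing $Dx_n$ and $Bx_n$ drop out, and what survives is $-2\lambda_n\langle p_n-u,Dp_n+\beta_nBp_n+v\rangle$, i.e. the forward operator is now evaluated at $p_n$ rather than at $x_n$. This is precisely the mechanism that lets Lipschitz continuity replace cocoercivity. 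For the quadratic remainder I would use the triangle inequality together with the $\eta^{-1}$- and $\mu^{-1}$-Lipschitz continuity of $D$ and $B$ to get $\|r_n\|\leq(\lambda_n/\eta+\lambda_n\beta_n/\mu)\|x_n-p_n\|$, which yields exactly the coefficient $1-\left(\frac{\lambda_n\beta_n}{\mu}+\frac{\lambda_n}{\eta}\right)^2$ in front of $\|x_n-p_n\|^2$.

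It remains to dispose of $-2\lambda_n\langle p_n-u,Dp_n+\beta_nBp_n+v\rangle$. Writing $v=w-p-Du$, the $D$-part regroups as $-2\lambda_n\langle p_n-u,Dp_n-Du\rangle$, which is nonpositive by monotonicity of $D$ and may simply be discarded, while the part carrying $w$ is exactly $2\lambda_n\langle w,u-p_n\rangle$. The remaining $B$- and $p$-terms combine into $2\lambda_n\beta_n\big[\langle u,Bp_n\rangle+\langle p_n,\frac{p}{\beta_n}\rangle-\langle p_n,Bp_n\rangle-\langle u,\frac{p}{\beta_n}\rangle\big]$. Since $(p_n,Bp_n)\in\gr B$, the definition of $\varphi_B$ bounds the first three of these by $\varphi_B(u,\frac{p}{\beta_n})$, whereas $\langle u,\frac{p}{\beta_n}\rangle=\sigma_C(\frac{p}{\beta_n})$ because $u\in C$ and $p\in N_C(u)$; using $u\in C$ once more to pass to the supremum over $C$ produces the penalty term in \eqref{lem19}. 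Collecting everything establishes the claimed inequality.

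The main obstacle I expect is organizing the Tseng cancellation cleanly: one has to track the $Dx_n$ and $Bx_n$ terms through both the monotonicity inequality and the expansion of the square so that they cancel and leave the forward operator evaluated at $p_n$. The subsequent Fitzpatrick estimate is then essentially a transcription of the argument in Lemma \ref{fb-ineq1-set-v}, the only genuine difference being that here the admissible pair fed into $\varphi_B$ is $(p_n,Bp_n)$ rather than $(x_n,w_n)$.
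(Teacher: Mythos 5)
Your proposal is correct and follows essentially the same route as the paper's proof: monotonicity of $A$ applied to the resolvent inclusion, Tseng's cancellation so that the forward operators end up evaluated at $p_n$ (the paper achieves this by substituting the update formula $x_{n+1}=p_n+\lambda_n\beta_n(Bx_n-Bp_n)+\lambda_n(Dx_n-Dp_n)$ into the monotonicity inequality, which is the same algebra you organize via $r_n$), the Lipschitz bound $\|x_{n+1}-p_n\|\leq\bigl(\tfrac{\lambda_n\beta_n}{\mu}+\tfrac{\lambda_n}{\eta}\bigr)\|x_n-p_n\|$, monotonicity of $D$, and the Fitzpatrick estimate with the admissible pair $(p_n,Bp_n)$ together with $\sigma_C\bigl(\tfrac{p}{\beta_n}\bigr)=\bigl\langle u,\tfrac{p}{\beta_n}\bigr\rangle$. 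All steps check out, so nothing needs to be changed.
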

\begin{proof} From the definition of the resolvent we have $\frac{x_n-p_n}{\lambda_n}-\beta_nBx_n-Dx_n\in Ap_n$ for every $n \in \N$ and since $v\in Au$, the monotonicity of $A$ guarantees
$$\langle x_n-p_n-\lambda_n(\beta_nBx_n+Dx_n+v),p_n-u\rangle\geq 0 \ \forall n \in \N,$$
thus
$$\langle x_n-p_n,u-p_n\rangle\leq \langle \lambda_n\beta_nBx_n+\lambda_nDx_n+\lambda_nv,u-p_n\rangle \ \forall n \in \N.$$

By using the definition of $x_{n+1}$ given in the algorithm we obtain
\begin{align*}
& \ \langle x_n-p_n,u-p_n\rangle\\
\leq & \ \langle x_{n+1}-p_n+\lambda_n\beta_nBp_n+\lambda_nDp_n+\lambda_nv,u-p_n\rangle\\
= & \ \langle x_{n+1}-p_n,u-p_n\rangle+\lambda_n\beta_n\langle Bp_n,u-p_n\rangle+\lambda_n\langle Dp_n,u-p_n\rangle+\lambda_n\langle v,u-p_n\rangle \ \forall n \in \N.
\end{align*}
From here it follows
\begin{align*}
& \ \frac{1}{2}\|u-p_n\|^2-\frac{1}{2}\|x_n-u\|^2+\frac{1}{2}\|x_n-p_n\|^2\\
\leq & \ \frac{1}{2}\|u-p_n\|^2-\frac{1}{2}\|x_{n+1}-u\|^2+\frac{1}{2}\|x_{n+1}-p_n\|^2\\
& \ +\lambda_n\beta_n\langle Bp_n,u-p_n\rangle+\lambda_n\langle Dp_n,u-p_n\rangle+\lambda_n\langle v,u-p_n\rangle \ \forall n \in \N.
\end{align*}
Since $v=w-p-Du$ and due to the fact that $D$ is monotone, we obtain for every $n \in \N$
\begin{align*}
& \ \|x_{n+1}-u\|^2-\|x_n-u\|^2\\
\leq & \ \|x_{n+1}-p_n\|^2-\|x_n-p_n\|^2+2\lambda_n\beta_n\left(\langle Bp_n,u\rangle+\left \langle p_n,\frac{p}{\beta_n} \right \rangle-\langle Bp_n,p_n\rangle
-\left \langle \frac{p}{\beta_n},u \right \rangle\right)\\
& \ +2\lambda_n\langle Dp_n-Du,u-p_n\rangle+2\lambda_n\langle w,u-p_n\rangle\\
\leq & \ \|x_{n+1}-p_n\|^2-\|x_n-p_n\|^2 +2\lambda_n\beta_n\left[\sup_{u\in C} \varphi_B\left(u,\frac{p}{\beta_n}\right)-\sigma_C\left(\frac{p}{\beta_n}\right)\right]+2\lambda_n\langle w,u-p_n\rangle.
\end{align*}
The conclusion follows, by noticing that the Lipschitz continuity of $B$ and $D$ yields
$$\|x_{n+1}-p_n\|\leq \frac{\lambda_n\beta_n}{\mu} \|x_n-p_n\|+\frac{\lambda_n}{\eta}\|x_n-p_n\| = \left (\frac{\lambda_n\beta_n}{\mu} + \frac{\lambda_n}{\eta} \right)\|x_n-p_n\| \ \forall n \in \N.$$
\end{proof}

The convergence of Algorithm \ref{alg-fbf} is stated below.

\begin{theorem}\label{fbf-conv} Let $(x_n)_{n \in \N}$ and $(p_n)_{n \in \N}$ be the sequences generated by Algorithm \ref{alg-fbf} and $(z_n)_{n \in \N}$ the sequence defined in \eqref{average}. If $(H_{fitz})$ is fulfilled and $\limsup_{n\rightarrow + \infty}\left (\frac{\lambda_n\beta_n}{\mu} + \frac{\lambda_n}{\eta} \right)<1$, then $(z_n)_{n \in \N}$ converges weakly to an element in $\zer(A+D+N_C)$ as $n\rightarrow+\infty$.
\end{theorem}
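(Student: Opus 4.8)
The plan is to mirror the structure of the proof of Theorem \ref{fb-conv-set-v}, invoking Lemma \ref{opial-passty} (Opial-Passty) applied to the averaged sequence $(z_n)_{n \in \N}$. Concretely, it suffices to establish two facts: first, that for every $u \in \zer(A+D+N_C)$ the sequence $(\|x_n-u\|)_{n \in \N}$ converges; and second, that every weak cluster point of $(z_n)_{n \in \N}$ belongs to $\zer(A+D+N_C)$. The workhorse is inequality \eqref{lem19} from Lemma \ref{fbf-ineq1}, together with the spectral gap assumption $\limsup_{n \to +\infty}\left(\frac{\lambda_n\beta_n}{\mu}+\frac{\lambda_n}{\eta}\right)<1$.

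First I would exploit the hypothesis on $\limsup$ to fix $n_0 \in \N$ and a constant $c>0$ so that $1-\left(\frac{\lambda_n\beta_n}{\mu}+\frac{\lambda_n}{\eta}\right)^2 \geq c > 0$ for all $n \geq n_0$; note this also forces $(\lambda_n)_{n \in \N}$ (hence $(\lambda_n\beta_n)_{n\in\N}$) to be bounded, a fact I will use implicitly below. For part (a), I specialize \eqref{lem19} to $u \in \zer(A+D+N_C)$ with $w=0$, so the last inner-product term vanishes; the coefficient of $\|x_n-p_n\|^2$ is nonnegative and may be discarded, leaving
\begin{equation*}
\|x_{n+1}-u\|^2-\|x_n-u\|^2 \leq 2\lambda_n\beta_n\left[\sup_{u\in C}\varphi_B\left(u,\tfrac{p}{\beta_n}\right)-\sigma_C\left(\tfrac{p}{\beta_n}\right)\right].
\end{equation*}
By hypothesis $(H_{fitz})(ii)$ the right-hand side is summable over $n$, so Lemma \ref{Fejer-real-seq} yields the convergence of $(\|x_n-u\|)_{n\in\N}$ (and, as a byproduct, that $\sum_n \left(1-\left(\frac{\lambda_n\beta_n}{\mu}+\frac{\lambda_n}{\eta}\right)^2\right)\|x_n-p_n\|^2 < +\infty$, whence $\|x_n-p_n\|\to 0$).

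For part (b), I take an arbitrary $(u,w)\in\gr(A+D+N_C)$ with $w=v+p+Du$, $v\in Au$, $p\in N_C(u)$, drop the nonnegative $\|x_n-p_n\|^2$-term in \eqref{lem19}, sum from $n_0$ to $N$, discard the nonnegative terminal term $\|x_{N+1}-u\|^2$, and divide by $2\tau_N=2\sum_{k=1}^N\lambda_k$. Using $(H_{fitz})(ii)$ to bound the Fitzpatrick sum and $\tau_N\to+\infty$ (from $(H_{fitz})(iii)$), I arrive at $\liminf_{N\to+\infty}\langle w,u-z_N\rangle\geq 0$. The only subtlety here — and the step I expect to be the main obstacle — is that the algebraic sum involves the averages of $p_n$ rather than of $x_n$; since $\langle w,u-p_n\rangle$ appears in \eqref{lem19} whereas $z_N$ averages the $x_n$, I must either reindex using the averaged weights directly on the $p_n$ and then transfer to $x_n$ via $\|x_n-p_n\|\to 0$, or absorb the discrepancy $\sum_{n} \lambda_n\langle w, p_n - x_n\rangle / \tau_N$ and show it vanishes (using boundedness of $w$ together with $\|p_n-x_n\|\to 0$ and a Cesàro/Toeplitz averaging argument). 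Once $\liminf_{N}\langle w,u-z_N\rangle\geq 0$ is secured, passing along a subsequence converging weakly to a cluster point $z$ gives $\langle w,u-z\rangle\geq 0$; since $A+D+N_C$ is maximally monotone (guaranteed by $(H_{fitz})(i)$ as noted after the definition of $(H_{fitz})$) and $(u,w)$ was arbitrary in its graph, characterization \eqref{charact-zeros-max} yields $z\in\zer(A+D+N_C)$. The conclusion then follows from Lemma \ref{opial-passty}.
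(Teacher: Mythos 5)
Your proposal is correct and follows essentially the same route as the paper: the paper likewise runs the Opial--Passty scheme on inequality \eqref{lem19}, obtains $\sum_{n}\|x_n-p_n\|^2<+\infty$ via Lemma \ref{Fejer-real-seq}, and resolves the discrepancy you flag (averages of $p_n$ versus averages of $x_n$) by introducing the auxiliary averages $z_n'=\frac{1}{\tau_n}\sum_{k=1}^n\lambda_k p_k$, showing their weak cluster points lie in $\zer(A+D+N_C)$, and proving $\|z_n-z_n'\|\to 0$ via Cauchy--Schwarz together with $(\lambda_n)_{n\in\N}\in\ell^2$. Your alternative of absorbing $\frac{1}{\tau_N}\sum_{n}\lambda_n\langle w,p_n-x_n\rangle$ by a Toeplitz/Ces\`aro averaging argument (using only $\|x_n-p_n\|\to 0$ and $\tau_N\to+\infty$) works equally well, so the subtlety you identified as the main obstacle is real but is resolvable exactly as you suggest.
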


\begin{proof} The proof of the theorem relies on the following three statements:
\begin{itemize}

\item[(a)] for every $u\in\zer(A+D+N_C)$ the sequence $(\|x_n-u\|)_{n \in \N}$ is convergent;

\item[(b)] every weak cluster point of $(z_n')_{n \in \N}$, where
$$z_n'=\frac{1}{\tau_n}\sum_{k=1}^n\lambda_k p_k \ \mbox{and} \ \tau_n= \sum_{k=1}^n\lambda_k \ \forall n \in \N,$$
lies in $\zer(A+D+N_C)$;

\item[(c)] every weak cluster point of $(z_n)_{n \in \N}$ lies in $\zer(A+D+N_C)$.
\end{itemize}

In order to show (a) and (b) one has only to slightly adapt the proof of Theorem \ref{fb-conv-set-v} and this is why we omit to give further details. For (c) it is enough to prove that $\lim_{n\rightarrow+\infty} \|z_n-z_n'\|=0$ and the statement of the theorem will be a consequence of Lemma \ref{opial-passty}.

Taking $u \in \zer(A+D+N_C)$ and $w=0=v+p+Du$, where $v \in Au$ and $p \in N_C(u)$, from \eqref{lem19} we have
\begin{align*}
& \ \|x_{n+1}-u\|^2-\|x_n-u\|^2+\left(1-\left(\frac{\lambda_n\beta_n}{\mu} + \frac{\lambda_n}{\eta} \right)^2\right)\|x_n-p_n\|^2\\
\leq & \ 2\lambda_n\beta_n\left[\sup_{u\in C}\varphi_B\left(u,\frac{p}{\beta_n}\right)-\sigma_C\left(\frac{p}{\beta_n}\right)\right].
\end{align*}
As $\limsup_{n\rightarrow + \infty}\left (\frac{\lambda_n\beta_n}{\mu} + \frac{\lambda_n}{\eta} \right)<1$, we obtain by Lemma \ref{Fejer-real-seq} that $\sum_{n \in \N}\|x_n-p_n\|^2<+\infty$.

Moreover, for any $n \in \N$ it holds
\begin{align*}
\|z_n-z_n'\|^2= & \frac{1}{\tau_n^2}\left\|\sum_{k=1}^n\lambda_k(x_k-p_k)\right\|^2\leq \frac{1}{\tau_n^2}\left(\sum_{k=1}^n\lambda_k\|x_k-p_k\|\right)^2\\
\leq & \frac{1}{\tau_n^2}\left(\sum_{k=1}^n\lambda_k^2\right)\left(\sum_{k=1}^n\|x_k-p_k\|^2\right).
\end{align*}
Since $(\lambda_n)_{n \in \N} \in \ell^2 \setminus \ell^1$, taking into consideration that $\tau_n = \sum_{k=1}^n \lambda_k \rightarrow +\infty \ (n \rightarrow +\infty)$, we obtain $\|z_n-z_n'\| \rightarrow 0 \ (n \rightarrow +\infty)$.
\end{proof}

As it happens for the forward-backward penalty scheme, strong monotonicity of the operator $A$ ensures strong convergence of the sequence $(x_n)_{n \in \N}$.

\begin{theorem}\label{str-mon-fbf} Let $(x_n)_{n \in \N}$ and $(p_n)_{n \in \N}$ be the sequences generated by Algorithm \ref{alg-fbf}. If $(H_{fitz})$ is fulfilled, $\limsup_{n\rightarrow + \infty}\left (\frac{\lambda_n\beta_n}{\mu} + \frac{\lambda_n}{\eta} \right)<1$ and the operator $A$ is $\gamma$-strongly monotone with $\gamma>0$, then $(x_n)_{n \in \N}$ converges strongly to the unique element in $\zer(A+D+N_C)$ as $n\rightarrow+\infty$.
\end{theorem}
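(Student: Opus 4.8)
The plan is to reproduce the estimate of Lemma \ref{fbf-ineq1}, this time exploiting the $\gamma$-strong monotonicity of $A$. Note first that $A$ strongly monotone together with $(H_{fitz})(i)$ forces $A+D+N_C$ to be strongly monotone and maximally monotone, so $\zer(A+D+N_C)$ is a singleton $\{u\}$; fix this $u$ and choose $v\in Au$, $p\in N_C(u)$ with $0=v+p+Du$, that is, $w=0$. In the proof of Lemma \ref{fbf-ineq1} the only place where $A$ enters is the monotonicity inequality applied to $(p_n,\frac{x_n-p_n}{\lambda_n}-\beta_nBx_n-Dx_n)\in\gr A$ and $(u,v)\in\gr A$. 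Replacing monotonicity by $\gamma$-strong monotonicity there produces the additional term $2\lambda_n\gamma\|p_n-u\|^2$ on the left-hand side, so that \eqref{lem19} with $w=0$ becomes, for all $n\in\N$,
\begin{align*}
& \|x_{n+1}-u\|^2-\|x_n-u\|^2+2\lambda_n\gamma\|p_n-u\|^2+\left(1-\left(\frac{\lambda_n\beta_n}{\mu}+\frac{\lambda_n}{\eta}\right)^2\right)\|x_n-p_n\|^2 \\
\leq & \ 2\lambda_n\beta_n\left[\sup_{u\in C}\varphi_B\left(u,\frac{p}{\beta_n}\right)-\sigma_C\left(\frac{p}{\beta_n}\right)\right].
\end{align*}

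Next I would pass to large $n$. Since $\limsup_{n\rightarrow+\infty}(\frac{\lambda_n\beta_n}{\mu}+\frac{\lambda_n}{\eta})<1$, there are $c\in(0,1)$ and $n_0\in\N$ with $1-(\frac{\lambda_n\beta_n}{\mu}+\frac{\lambda_n}{\eta})^2\geq 1-c^2>0$ for every $n\geq n_0$. Setting $a_n=\|x_n-u\|^2$, $b_n=2\lambda_n\gamma\|p_n-u\|^2+(1-(\frac{\lambda_n\beta_n}{\mu}+\frac{\lambda_n}{\eta})^2)\|x_n-p_n\|^2\geq 0$, and letting $\varepsilon_n$ be the right-hand side above, hypothesis $(H_{fitz})(ii)$ guarantees $(\varepsilon_n)_{n\geq n_0}\in\ell^1$ (observe that $p\in\ran N_C$). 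Hence Lemma \ref{Fejer-real-seq} applies from $n_0$ on and yields simultaneously that $(\|x_n-u\|)_{n\in\N}$ is convergent, that $\sum_{n\in\N}\lambda_n\|p_n-u\|^2<+\infty$, and that $\sum_{n\in\N}\|x_n-p_n\|^2<+\infty$, the last because the coefficient of $\|x_n-p_n\|^2$ is bounded below by $1-c^2$. In particular $\|x_n-p_n\|\rightarrow 0$ as $n\rightarrow+\infty$.

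Finally I would combine these facts. Because $\sum_{n\in\N}\lambda_n=+\infty$ while $\sum_{n\in\N}\lambda_n\|p_n-u\|^2<+\infty$, we must have $\liminf_{n\rightarrow+\infty}\|p_n-u\|=0$. Using $\|x_n-u\|\leq\|x_n-p_n\|+\|p_n-u\|$ together with $\|x_n-p_n\|\rightarrow 0$ gives $\liminf_{n\rightarrow+\infty}\|x_n-u\|=0$; since $(\|x_n-u\|)_{n\in\N}$ converges, its limit must be $0$, that is, $x_n\rightarrow u$ strongly, where $u$ is the unique element of $\zer(A+D+N_C)$.

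I expect the main obstacle to be the mismatch between the two sequences: the strong monotonicity of $A$ controls $\|p_n-u\|$, the resolvent output, rather than $\|x_n-u\|$, in contrast to Theorem \ref{str-mon-fb} where the analogous term was directly $\|x_{n+1}-u\|^2$. Bridging this gap is exactly where the forward-backward-forward correction pays off: the summability $\sum_{n\in\N}\|x_n-p_n\|^2<+\infty$, forcing $\|x_n-p_n\|\rightarrow 0$, is what transfers the $\liminf$ from $(p_n)_{n\in\N}$ back to $(x_n)_{n\in\N}$ and, in combination with the convergence of $(\|x_n-u\|)_{n\in\N}$, upgrades it to a genuine limit.
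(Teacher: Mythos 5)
Your proof is correct and follows essentially the same route as the paper: the same strengthened form of the inequality in Lemma \ref{fbf-ineq1} with the extra term $2\gamma\lambda_n\|p_n-u\|^2$ coming from the strong monotonicity of $A$, the same application of Lemma \ref{Fejer-real-seq} giving convergence of $(\|x_n-u\|)_{n\in\N}$ together with $\sum_{n\in\N}\lambda_n\|p_n-u\|^2<+\infty$ and $\sum_{n\in\N}\|x_n-p_n\|^2<+\infty$, and the same conclusion drawn from $\sum_{n\in\N}\lambda_n=+\infty$. The only cosmetic difference lies in the final bookkeeping: the paper deduces $\sum_{n\in\N}\lambda_n\|x_n-u\|^2<+\infty$ via $\|x_n-u\|^2\leq 2\|x_n-p_n\|^2+2\|p_n-u\|^2$ and the boundedness of $(\lambda_n)_{n\in\N}$, whereas you pass to a subsequence along which $\|p_n-u\|\rightarrow 0$ and use the triangle inequality; both immediately yield $\liminf_{n\rightarrow+\infty}\|x_n-u\|=0$ and hence the claimed strong convergence.
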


\begin{proof} Let be $u \in \zer(A+D+N_C)$ and $w=0=v+p+Du$, where $v \in Au$ and $p \in N_C(u)$. Following the lines of the proof of Lemma \ref{fbf-ineq1} one can easily show that
\begin{align*}
& \ 2\gamma\lambda_n\|p_n-u\|^2+ \|x_{n+1}-u\|^2-\|x_n-u\|^2+\left(1-\left(\frac{\lambda_n\beta_n}{\mu} + \frac{\lambda_n}{\eta} \right)^2\right)\|x_n-p_n\|^2\\
\leq & \ 2\lambda_n\beta_n\left[\sup_{u\in C}\varphi_B\left(u,\frac{p}{\beta_n}\right)-\sigma_C\left(\frac{p}{\beta_n}\right)\right] \ \forall n \in \N.
\end{align*}
The hypotheses imply the existence of $n_0 \in \N$ such that for every $n\geq n_0$
$$2\gamma\lambda_n\|p_n-u\|^2+\|x_{n+1}-u\|^2-\|x_n-u\|^2\leq 2\lambda_n\beta_n\left[\sup_{u\in C} \varphi_B \left(u,\frac{p}{\beta_n}\right) - \sigma_C\left(\frac{p}{\beta_n}\right)\right].$$
As in the proof of Theorem \ref{str-mon-fb}, from here it follows that
$$\sum_{n \in \N}\lambda_n\|p_n-u\|^2<\infty.$$

Since $(\lambda_n)_{n \in \N}$ is bounded above and $\sum_{n \in N} \|x_n-p_n\|^2<+\infty$ (see the proof of Theorem \ref{fbf-conv}), it yields
\begin{align*}
\sum_{n=1}^\infty\lambda_n\|x_n-u\|^2 \leq 2\sum_{n=1}^\infty\lambda_n\|x_n-p_n\|^2+2\sum_{n=1}^\infty\lambda_n\|p_n-u\|^2 < +\infty.
\end{align*}
As $\sum_{n \in \N} \lambda_n=+\infty$ and  $(\|x_n-u\|)_{n \in \N}$ is convergent, it follows $\lim_{n\rightarrow+\infty}\|x_n-u\|=0$.
\end{proof}

\subsection{Monotone inclusion problems involving compositions with linear continuous operators}\label{subsec3.2}

In this subsection we will show that the Tseng's type penalty scheme investigated in the previous section allows the solving of monotone inclusion problems with a more intricate formulation. The problem under consideration is the following.

\begin{problem}\label{pr-fbf-comp} Let ${\cal H}$ and ${\cal G}$ be real Hilbert spaces, $A_1:{\cal H}\rightrightarrows {\cal H}$ and $A_2:{\cal G}\rightrightarrows {\cal G}$ maximally monotone operators, $K:{\cal H}\rightarrow {\cal G}$ linear continuous operator, $D:{\cal H}\rightarrow{\cal H}$ a monotone and $\eta^{-1}$-Lipschitz continuous operator with $\eta > 0$, $B:{\cal H}\rightarrow{\cal H}$ a monotone and $\mu^{-1}$-Lipschitz continuous operator with $\mu>0$ suppose that $C=\zer B \neq\emptyset$. The monotone inclusion problem to solve is
$$0\in A_1x+K^*A_2Kx+Dx+N_C(x).$$
\end{problem}

We use the product space approach (see \cite{bauschke-book, combettes-pesquet, br-combettes, b-c-h, b-h}) in order to show that the above problem can be reformulated as the monotone inclusion problem treated in Subsection \ref{fbf-sum} in an appropriate product space. To this end we consider the real Hilbert space ${\cal H}\times{\cal G}$ endowed with the inner product
$$\langle(x,v),(x',v')\rangle_{{\cal H}\times{\cal G}}=\langle x,x'\rangle_{\cal H}+\langle v,v'\rangle_{\cal G} \ \forall (x,v),(x',v')\in {\cal H}\times{\cal G}$$
and corresponding norm. We define the following operators on ${\cal H}\times {\cal G}$. For $(x,v)\in {\cal H}\times {\cal G}$ we set
$$\widetilde A(x,v)=A_1x\times A_2^{-1}v, \ \ \widetilde D(x,v)=(Dx+K^*v,-Kx), \ \ \widetilde{B}(x,v)=(Bx,0)$$
and, for  $\widetilde{C}=C\times {\cal G} = \zer \widetilde B$,
$$N_{\widetilde{C}}(x,v) = N_C(x)\times \{0\}.$$
One can easily show that if $(x,v)\in\zer (\widetilde A + \widetilde D + N_{\widetilde{C}})$, then $x\in\zer(A_1+K^*A_2K+D+N_C)$. Conversely, when $x\in\zer(A_1+K^*A_2K+D+N_C)$, then exists $v\in A_2Kx$ such that $(x,v)\in\zer (\widetilde A + \widetilde D + N_{\widetilde{C}})$. Thus, determining the zeros of the operator $\widetilde A + \widetilde D + N_{\widetilde{C}}$ will provide a solution for the monotone inclusion problem in Problem \ref{pr-fbf-comp}.

One has that $\widetilde A$ is maximally monotone (see \cite[Proposition 20.23]{bauschke-book}), $\widetilde B$ is monotone and $\widetilde \eta$-Lipschitz continuous, where $\widetilde \eta=\sqrt{2\left(\frac{1}{\eta^2}+\|K\|^2\right)}$ and $\widetilde B$ is monotone and $\mu^{-1}$-Lipschitz continuous. All these considerations show that we are in the context of Problem \ref{pr-Lip-single-val}, thus, in order to determine the zeros of $\widetilde A + \widetilde D + N_{\widetilde{C}}$, we can use Algorithm \ref{alg-fbf}, the iterations of which read for any $n \in \N$ as follows:
$$\left\{
\begin{array}{ll}
(p_n,q_n)=J_{\lambda_n \widetilde A}\left[(x_n,v_n)-\lambda_n \widetilde D(x_n,v_n)-\lambda_n\beta_n\widetilde{B}(x_n,v_n)\right]\\
(x_{n+1},v_{n+1})=\lambda_n\beta_n\left[\widetilde{B}(x_n,v_n)-\widetilde{B}(p_n,q_n)\right]+\lambda_n\left[\widetilde D(x_n,v_n)-\widetilde D(p_n,q_n)\right]+(p_n,q_n).
\end{array}\right.$$

Since $J_{\lambda \widetilde A}(x,v)=(J_{\lambda A_1}(x),J_{\lambda A_2^{-1}}(v))$ for every $(x,v)\in{\cal H}\times{\cal G}$ and every $\lambda > 0$  (see \cite[Proposition 23.16]{bauschke-book}), this gives rise to the following iterative scheme.

\begin{algorithm}\label{alg-fbf-comp}$ $

\noindent\begin{tabular}{rl}
\verb"Initialization": & \verb"Choose" $(x_1,v_1)\in{\cal H}\times{\cal G}$\\
\verb"For" $n \in \N$ \verb"set": & $p_n=J_{\lambda_n A_1}\Big(x_n-\lambda_n(Dx_n+K^*v_n)-\lambda_n\beta_nBx_n\Big)$\\
                                 &  $q_n=J_{\lambda_nA_2^{-1}}(v_n+\lambda_nKx_n)$\\
                                 & $x_{n+1}=\lambda_n\beta_n(Bx_n-Bp_n)+\!\lambda_n(Dx_n-Dp_n) + \!\lambda_nK^*(v_n-q_n)+p_n$\\
                                 & $v_{n+1}=\lambda_nK(p_n-x_n)+q_n$,
\end{tabular}
\end{algorithm}
where $(\lambda_n)_{n \in \N}$ and $(\beta_n)_{n \in \N}$ are sequences of positive real numbers. For the convergence of this iterative scheme the following hypotheses are needed:
$$(H_{fitz}^{comp})\left\{
\begin{array}{lll}
(i) \ A_1+N_C \mbox{ is maximally monotone and }\zer(A_1+K^*A_2K+D+N_C)\neq\emptyset;\\
(ii) \ \mbox{ For each }p\in\ran (N_C), \sum_{n \in \N} \lambda_n\beta_n\left[\sup\limits_{u\in C}\varphi_B\left(u,\frac{p}{\beta_n}\right)-\sigma_C\left(\frac{p}{\beta_n}\right)\right]<+\infty;\\
(iii) \ (\lambda_n)_{n \in \N} \in\ell^2\setminus\ell^1.\end{array}\right.$$
One will see that  $(H_{fitz}^{comp})$ implies the hypotheses $(H_{fitz})$ formulated in the context of the monotone inclusion problem of finding the zeros of $\widetilde A + \widetilde D + N_{\widetilde{C}}$. Indeed, hypothesis (i) in $(H_{fitz}^{comp})$ guarantees that $\widetilde A+N_{\widetilde{C}}$ is maximally monotone and, as already seen, that $\zer (\widetilde A + \widetilde D + N_{\widetilde{C}}) \neq \emptyset$. Further, we have for all $(x,v),(x',v')\in{\cal H}\times{\cal G}$ that
$$\varphi_{\widetilde{B}}\big((x,v),(x',v')\big)=\left\{
\begin{array}{ll}
\varphi_B(x,x'), & \mbox {if } v'=0,\\
+\infty, & \mbox{otherwise}
\end{array}\right.$$
and
$$\sigma_{\widetilde{C}}(x,v)=\left\{
\begin{array}{ll}
\sigma_C(x), & \mbox {if } v=0,\\
+\infty, & \mbox{otherwise}.
\end{array}\right.$$
Moreover, $\ran N_{\widetilde{C}}=\ran N_C\times\{0\}$. Hence, condition (ii) in $(H_{fitz}^{comp})$ is nothing else than
$$\!\mbox{for each }(p,p')\in\ran (N_{\widetilde{C}}), \sum_{n \in \N} \lambda_n\beta_n\left[\sup\limits_{(u,u')\in \widetilde{C}}\varphi_{\widetilde{B}}\left((u,u'),\frac{(p,p')}{\beta_n}\right)-\sigma_{\widetilde{C}}\left(\frac{(p,p')}{\beta_n}\right)\right]<+\infty.$$
The following convergence statement is a direct consequence of Theorem \ref{fbf-conv} and Theorem \ref{str-mon-fbf}.

\begin{theorem}\label{fbf-conv-comp}
Let $(x_n)_{n \in \N}$, $(v_n)_{n \in \N}$, $(p_n)_{n \in \N}$ and $(q_n)_{n \in \N}$ be the sequences generated by Algorithm \ref{alg-fbf-comp} and $(z_n)_{n \in \N}$ the sequence defined in \eqref{average}. If $(H_{fitz}^{comp})$ is fulfilled and
$$\limsup_{n\rightarrow + \infty}\left (\frac{\lambda_n\beta_n}{\mu} + \lambda_n \sqrt{2\left(\frac{1}{\eta^2}+\|K\|^2\right)} \right)<1,$$
then $(z_n)_{n \in \N}$ converges weakly to an element in $\zer(A_1+K^*A_2K+D+N_C)$ as $n\rightarrow+\infty$. If, additionally, $A_1$ and $A_2^{-1}$ are strongly monotone, then $(x_n)_{n \in \N}$ converges strongly to the unique element in $\zer(A_1+K^*A_2K+D+N_C)$ as $n\rightarrow+\infty$.
\end{theorem}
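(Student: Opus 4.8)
The plan is to exploit the product-space reformulation already assembled before the statement and to observe that Algorithm~\ref{alg-fbf-comp} is nothing but Algorithm~\ref{alg-fbf} applied to the operators $\widetilde A$, $\widetilde D$, $\widetilde B$ on ${\cal H}\times{\cal G}$, so that the conclusion follows by transporting Theorem~\ref{fbf-conv} and Theorem~\ref{str-mon-fbf} back to ${\cal H}$. First I would confirm that the resolvent splitting $J_{\lambda\widetilde A}(x,v)=(J_{\lambda A_1}x,J_{\lambda A_2^{-1}}v)$ turns the abstract recursion for $(x_n,v_n)$ into exactly the four update lines of Algorithm~\ref{alg-fbf-comp}; this is a componentwise rewriting and carries no difficulty.

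The substantive verification is that the triple $(\widetilde A,\widetilde D,\widetilde B)$ meets the standing assumptions of Problem~\ref{pr-Lip-single-val}. Maximal monotonicity of $\widetilde A$ is cited, and $\widetilde B(x,v)=(Bx,0)$ is plainly monotone and $\mu^{-1}$-Lipschitz with $\zer\widetilde B=\widetilde C=C\times{\cal G}\neq\emptyset$. For $\widetilde D$ I would split $\widetilde D(x,v)=(Dx,0)+(K^*v,-Kx)$; the second summand is skew, hence monotone, and monotonicity is preserved under the sum, so $\widetilde D$ is monotone. For the Lipschitz constant I would estimate
\begin{align*}
\|\widetilde D(x,v)-\widetilde D(x',v')\|^2
& = \|Dx-Dx'+K^*(v-v')\|^2+\|K(x-x')\|^2\\
& \leq \left(\tfrac{2}{\eta^2}+\|K\|^2\right)\|x-x'\|^2+2\|K\|^2\|v-v'\|^2,
\end{align*}
using $\|a+b\|^2\leq2\|a\|^2+2\|b\|^2$ together with the $\eta^{-1}$-Lipschitz property of $D$; since both coefficients are bounded by $\widetilde\eta^2=2(\eta^{-2}+\|K\|^2)$, this shows $\widetilde D$ is $\widetilde\eta$-Lipschitz. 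Finally, $(H_{fitz}^{comp})\Rightarrow(H_{fitz})$ for the product problem is exactly the implication established in the paragraphs preceding the statement, via the formulae for $\varphi_{\widetilde B}$, $\sigma_{\widetilde C}$ and $\ran N_{\widetilde C}$.

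With the reduction in place I would translate the step-size hypothesis. Since $\widetilde D$ is $\widetilde\eta$-Lipschitz, it plays in Problem~\ref{pr-Lip-single-val} the role of an operator that is $\eta_0^{-1}$-Lipschitz with $\eta_0=\widetilde\eta^{-1}$; hence the quantity $\lambda_n/\eta$ occurring in Theorem~\ref{fbf-conv} equals here $\lambda_n\widetilde\eta=\lambda_n\sqrt{2(\eta^{-2}+\|K\|^2)}$, and the assumed bound $\limsup_{n\to+\infty}(\frac{\lambda_n\beta_n}{\mu}+\lambda_n\sqrt{2(\eta^{-2}+\|K\|^2)})<1$ is precisely the step-size condition demanded by that theorem. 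Applying Theorem~\ref{fbf-conv} then yields weak convergence of the weighted averages of $(x_n,v_n)$ to some $(\bar x,\bar v)\in\zer(\widetilde A+\widetilde D+N_{\widetilde C})$; reading off the ${\cal H}$-component $z_n$ and invoking the equivalence $\bar x\in\zer(A_1+K^*A_2K+D+N_C)$ recorded earlier gives the weak ergodic statement.

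For the strong convergence assertion I would observe that if $A_1$ and $A_2^{-1}$ are strongly monotone then $\widetilde A(x,v)=A_1x\times A_2^{-1}v$ is strongly monotone, with constant the minimum of the two, after which Theorem~\ref{str-mon-fbf} applies to $(\widetilde A,\widetilde D,\widetilde B)$ and delivers strong convergence of $(x_n,v_n)$, hence of $(x_n)$, to the unique zero. The only place demanding genuine care is the Lipschitz estimate for $\widetilde D$ and the attendant bookkeeping that makes the displayed $\limsup$ condition line up exactly with the hypothesis of Theorem~\ref{fbf-conv}; the remaining steps are mechanical transcriptions of results already proved.
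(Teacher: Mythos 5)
Your proposal is correct and follows essentially the same route as the paper: the paper proves this theorem precisely by noting that Algorithm \ref{alg-fbf-comp} is Algorithm \ref{alg-fbf} applied to $(\widetilde A,\widetilde D,\widetilde B)$ in ${\cal H}\times{\cal G}$, that $(H_{fitz}^{comp})$ implies $(H_{fitz})$ for the product problem, and then invoking Theorem \ref{fbf-conv} and Theorem \ref{str-mon-fbf}. Your write-up in fact supplies details the paper leaves implicit (the explicit Lipschitz estimate showing $\widetilde D$ is $\widetilde\eta$-Lipschitz with $\widetilde\eta=\sqrt{2(\eta^{-2}+\|K\|^2)}$, and the strong monotonicity of $\widetilde A$ inherited from $A_1$ and $A_2^{-1}$), and these verifications are accurate.
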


\begin{remark}\label{lip-cocoerc} We applied the forward-backward-forward penalty scheme in the product space in order to solve monotone inclusion problems where also compositions with linear continuous operators are involved. Let us underline the fact that, even in the situation when $B$ is cocoercive and, hence, $\widetilde B$ is cocoercive, the forward-backward penalty scheme in Algorithm \ref{alg-fb-single-val} cannot be applied in this context, because the operator $\widetilde D$ is definitely not cocoercive. This is due to the presence of the skew operator $(x,v)\mapsto(K^*v,-Kx)$ in its definition. This fact provides a good motivation for formulating, along the forward-backward penalty scheme, a forward-backward-forward penalty scheme for the monotone inclusion problem investigated in this paper.
\end{remark}

\begin{remark} In the particular case $Dx=0$ for every $x\in{\cal H}$ and $Bx=0$ for every $x\in{\cal H}$ (which corresponds to the situation when $N_C(x)=\{0\}$ for every $x\in{\cal H}$) Algorithm \ref{alg-fbf-comp} turns out to be the error-free case of the forward-backward-forward scheme proposed and investigated from the point of view of its convergence in \cite[Theorem 3.1]{br-combettes}.

\end{remark}

\end{document}